\newtheorem{theorem}{Theorem}
\newtheorem{lemma}[theorem]{Lemma}
\newtheorem{proposition}[theorem]{Proposition}
\newcommand{\R}{\mathbb{R}}
\newcommand{\Z}{\mathbb{Z}}
\newcommand{\Q}{\mathbb{Q}}
\newcommand{\C}{\mathbb{C}}
\newcommand{\p}{\mathfrak{p}}
\renewcommand{\P}{\mathfrak{P}}
\renewcommand{\d}{\mathfrak{d}}
\newcommand{\D}{\mathfrak{D}}
\renewcommand{\O}{\mathcal{O}}
\newcommand{\f}{\mathfrak{f}}
\renewcommand{\a}{\mathfrak{a}}
\renewcommand{\b}{\mathfrak{b}}
\renewcommand{\c}{\mathfrak{c}}
\newcommand{\q}{\mathfrak{q}}
\renewcommand{\r}{\mathfrak{r}}
\newcommand{\AN}{\mathfrak{N}}
\newcommand{\cR}{\mathcal{R}}
\newcommand{\ul}{\underline}
\DeclareMathOperator{\supp}{supp}
\DeclareMathOperator{\Vol}{Vol}
\title{On rings of integers generated by their units}
\author{Christopher Frei}
\email{frei@math.tugraz.at}
\address{Institut f\"ur Mathematik A\\
Technische Universit\"at Graz\\
Steyrergasse 30, A-8010 Graz\\
Austria}
\date{}
\keywords{sums of units, rings of integers, generated by units, additive unit representations}
\subjclass[2010]{Primary 11R04; Secondary 11R27}
\thanks{The author is supported by the Austrian Science Foundation (FWF) project S9611-N23.}
\begin{document}

\begin{abstract}
We give an affirmative answer to the following question by Jarden and Narkiewicz: Is it true that every number field has a finite extension $L$ such that the ring of integers of $L$ is generated by its units (as a ring)?

As a part of the proof, we generalise a theorem by Hinz on power-free values of polynomials over number fields.
\end{abstract}

\maketitle

\section{Introduction}
The earliest result regarding the additive structure of units in rings of algebraic integers dates back to 1964, when Jacobson \cite{Jacobson1964} proved that every element of the rings of integers of $\Q(\sqrt{2})$ and $\Q(\sqrt{5})$ can be written as a sum of distinct units. Later, {\'S}liwa \cite{Sliwa1974} continued Jacobson's work, proving that there are no other quadratic number fields with that property, nor any pure cubic ones. Belcher \cite{Belcher1974}, \cite{Belcher1975} continued along these lines and investigated cubic and quartic number fields. 

In a particularly interesting lemma \cite[Lemma 1]{Belcher1974}, Belcher characterised all quadratic number fields whose ring of integers is generated by its units: These are exactly the fields $\Q(\sqrt{d})$, $d \in \Z$ squarefree, for which either
\begin{enumerate}
 \item $d \in \{-1, -3\}$, or
 \item $d > 0$, $d \not\equiv 1 \mod 4$, and $d+1$ or $d-1$ is a perfect square, or
 \item $d > 0$, $d \equiv 1 \mod 4$, and $d+4$ or $d-4$ is a perfect square.
\end{enumerate}
This result was independently proved again by Ashrafi and V{\'a}mos \cite{Ashrafi2005}, who also showed the following: Let $\O$ be the ring of integers of a quadratic or complex cubic number field, or of a cyclotomic number field of the form $\Q(\zeta_{2^n})$. Then there is no positive integer $N$ such that every element of $\O$ is a sum of $N$ units.

Jarden and Narkiewicz \cite{Jarden2007} proved a more general result which implies that the ring of integers of every number field has this property: If $R$ is a finitely generated integral domain of zero characteristic then there is no integer $N$ such that every element of $R$ is a sum of at most $N$ units. This also follows from a result obtained independently by Hajdu \cite{Hajdu2007}. The author \cite{Frei2010} proved an analogous version of this and of Belcher's result for rings of $S$-integers in function fields.

In \cite{Jarden2007}, Jarden and Narkiewicz raised three open problems:
\begin{enumerate}[A.]
 \item Give a criterion for an algebraic extension $K$ of the rationals to have the property that the ring of integers of $K$ is generated by its units.
 \item Is it true that each number field has a finite extension $L$ such that the ring of integers of $L$ is generated by its units?
 \item Let $K$ be an algebraic number field. Obtain an asymptotical formula for the number $N_k(x)$ of positive rational integers $n \leq x$ which are sums of at most $k$ units of the ring of integers of $K$.
\end{enumerate}

The result by Belcher stated above solves Problem A for quadratic number fields. Similar criteria have been found for certain types of cubic and quartic number fields \cite{Filipin2008}, \cite{Tichy2007}, \cite{Ziegler2008}. All these results have in common that the unit group of the ring in question is of rank $1$. 

Quantitative questions similar to Problem C were investigated in \cite{Filipin2008}, \cite{Filipin2008B}, \cite{Fuchs2009}. The property asked for in Problem B is known to hold for number fields with an Abelian Galois group, due to the Kronecker-Weber theorem. However, this is all that was known until recently, when the author \cite{Frei2010B} affirmatively answered the question in the function field case. In this paper, we use similar ideas to solve Problem B in its original number field version:

\begin{theorem}\label{extension}
For every number field $K$ there exists a number field $L$ containing $K$ such that the ring of integers of $L$ is generated by its units (as a ring).
\end{theorem}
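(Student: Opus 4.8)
First I would reformulate the conclusion. If a subring of $\O_L$ contains every unit of $\O_L$, it contains every $\Z$-linear combination of products of units, hence every $\Z$-linear combination of units, and since $-1$ is a unit this is precisely the set of finite sums of units; conversely every element of the subring generated by the units of $\O_L$ is such a sum. So it suffices to produce a finite extension $L/K$ in which every element of $\O_L$ is a finite sum of units. Now $\O_K$ is finitely generated as a ring over $\Z$, say $\O_K=\Z[\theta_1,\dots,\theta_r]$ with each $\theta_j\in\O_K$; it is then enough to arrange that every $\theta_j$ is a sum of units of $\O_L$ while keeping $\O_L$ generated, as a ring, by $\O_K$ together with units of $\O_L$. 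This suggests building $L$ as a tower.

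Concretely, I would construct $K=L_0\subseteq L_1\subseteq\dots\subseteq L_r=L$ with $L_i=L_{i-1}(\alpha_i)$, where $\alpha_i$ is a root of
\[
g_i(x)=x^2-(\theta_i+n_i)\,x+1\in\O_{L_{i-1}}[x]
\]
for a rational integer $n_i$ to be chosen, arranged so that $\O_{L_i}=\O_{L_{i-1}}[\alpha_i]$ at every stage. Then $\O_L=\O_K[\alpha_1,\dots,\alpha_r]=\Z[\theta_1,\dots,\theta_r,\alpha_1,\dots,\alpha_r]$, so $\O_L$ is generated as a ring by the $\theta_j$ and the $\alpha_j$. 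The second root of $g_i$ equals $\theta_i+n_i-\alpha_i\in\O_{L_i}$ and the product of the two roots is $1$, so both are units of $\O_L$; in particular $\alpha_i$ is a unit, and the identity
\[
\theta_i=\alpha_i+(\theta_i+n_i-\alpha_i)-n_i
\]
writes $\theta_i$ as a sum of two units and $|n_i|$ copies of $\mp 1$. Hence every ring generator of $\O_L$, and therefore every element of $\O_L$, is a sum of units, and the theorem follows.

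It remains to choose $n_i\in\Z$ so that $\O_{L_i}=\O_{L_{i-1}}[\alpha_i]$. The discriminant of $g_i$ is $(\theta_i+n_i)^2-4\in\O_{L_{i-1}}$, and by the standard relation
\[
\bigl((\theta_i+n_i)^2-4\bigr)\,\O_{L_{i-1}}=\mathfrak{f}^2\cdot\operatorname{disc}(L_i/L_{i-1})
\]
between the polynomial discriminant, the module index $\mathfrak{f}$ of $\O_{L_{i-1}}[\alpha_i]$ in $\O_{L_i}$, and the relative discriminant, it is enough that the ideal $\bigl((\theta_i+n_i)^2-4\bigr)$ of $\O_{L_{i-1}}$ be squarefree and $\neq\O_{L_{i-1}}$: then $\mathfrak{f}^2$ divides a squarefree ideal, forcing $\mathfrak{f}=\O_{L_{i-1}}$, while being a non-unit makes $g_i$ irreducible as well, since a squarefree non-unit is not a square. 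Thus the whole theorem reduces to the assertion that, for each $i$, the polynomial $P_i(x)=(\theta_i+x)^2-4\in\O_{L_{i-1}}[x]$ takes a value whose ideal is squarefree and proper at some rational integer $x=n_i$.

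This is exactly where the generalisation of Hinz's theorem on power-free values of polynomials over number fields that we establish in this paper comes in, applied with ground field $L_{i-1}$ and exponent $k=2$. Its hypotheses are readily verified for $P_i$: factoring $P_i(x)=(\theta_i+x-2)(\theta_i+x+2)$ and inspecting the primes above $2$ separately from the rest, one checks that no prime $\mathfrak{p}$ of $\O_{L_{i-1}}$ satisfies $\mathfrak{p}^2\mid P_i(n)$ for all $n\in\Z$, so $P_i$ has no fixed square divisor; the theorem then supplies infinitely many admissible $n_i$, and those giving a proper ideal value certainly occur among them. The genuine obstacle --- and the reason Hinz's theorem has to be re-proved in a stronger form rather than merely invoked --- is that the classical statement lets the argument range over all of $\O_{L_{i-1}}$, whereas here it must be restricted to $\Z\subseteq\O_{L_{i-1}}$, so that the corrective term $n_i$ contributes only the units $\pm 1$ and the construction does not become circular. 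Showing that a positive power-free density survives along this thin one-parameter family, with a serviceable error term, is what I expect to form the technical heart of the paper, plausibly carried out by a sieve over the residue classes of $\O_{L_{i-1}}$ modulo squares of primes.
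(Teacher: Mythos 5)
Your overall architecture---quadratic towers generated by roots of $X^2-tX+u$ with unit constant term, a squarefree discriminant forcing $\O_{L_i}=\O_{L_{i-1}}[\alpha_i]$, and the identity writing the trace $t$ as a sum of two units---is exactly the paper's. The gap sits in the one step you yourself flag as the heart of the matter: producing the squarefree value. By insisting that the correction term $n_i$ lie in $\Z$ (so that it is trivially a sum of units $\pm1$), you reduce the theorem to finding $n\in\Z$ with $\bigl((\theta_i+n)^2-4\bigr)$ squarefree in $\O_{L_{i-1}}$, a squarefree-values problem along the rank-one subgroup $\Z$ of a lattice of rank $[L_{i-1}:\Q]$. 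This is not what the paper's generalisation of Hinz provides, and it breaks in two ways. First, there is a genuine local obstruction at $2$: since $\P^2\mid(4)$ for every prime $\P\mid 2$, squarefreeness at $\P$ forces $\theta_i+n\notin\P$; for $K=\Q(\sqrt{-7})$, $\theta=(1+\sqrt{-7})/2$, with $2\O_K=\P_1\P_2$ and $\theta\equiv 0\ (\P_1)$, $\theta\equiv 1\ (\P_2)$, every $n\in\Z$ lands in one of the two bad residue classes, so $\bigl((\theta+n)^2-4\bigr)$ is never squarefree and your first step already fails. Second, even absent such a covering obstruction, the thin family is analytically out of reach: degree-one primes $\P\mid p$ with $p$ as large as roughly $N^{[L_{i-1}:\Q]}$ can divide $(\theta+n)^2-4$ to order $2$, and the trivial count of excluded $n\le N$ coming from primes $p>N^{1/2}$ already exceeds $N$; this is the same barrier as for squarefree values of an integer polynomial of degree $2[L_{i-1}:\Q]$, which is open beyond small degrees.

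The paper's Theorem \ref{density} restricts the argument of $f$ not to $\Z$ but to an \emph{order} $\O$ of $K$, i.e.\ a full-rank sublattice, which keeps the count within reach of Hinz's lattice-point method. The correction-term problem is then dissolved rather than solved: one first arranges that $K$ is generated by a unit, so that the ring $\O^U$ generated by the units of $\O_K$ is an order; Proposition \ref{sqfree} (a density comparison $D_\O<D$ between $\O$ and $\O_K$) then produces $\omega\in\O_K\smallsetminus\O$ with $\omega^2-4\eta$ squarefree, $\eta$ a unit, and iterating gives $\O_K=\O^U[\omega_1,\dots,\omega_r]$. Adjoining roots $\alpha_i$ of $X^2-\omega_iX+\eta$ yields $\omega_i=\alpha_i+\eta\alpha_i^{-1}$, a sum of two units with no additive correction at all, and $\O_L=\O^U[\alpha_1^{\pm1},\dots,\alpha_r^{\pm1}]$ is generated by units. (A preliminary passage to $K(\sqrt5)$ guarantees the residue-degree hypotheses that make the density comparison strict.) To salvage your set-up you would have to let the shift range over a full-rank set already known to consist of sums of units---which is precisely why the paper works relative to the order $\O^U$ rather than relative to $\Z$.
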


It is crucial to our proof to establish the existence of integers of $K$ with certain properties (see Proposition \ref{sqfree}). We achieve this by asymptotically counting such elements. To this end, we need a generalised version of a theorem by Hinz \cite[Satz 1.1]{Hinz1982}, which is provided first. Let us start with some notation.
\section{Notation and auxiliary results}\label{section_notation}
All rings considered are commutative and with unity, and the ideal $\{0\}$ is never seen as a prime ideal. Two ideals $\a$, $\b$ of a ring $R$ are \emph{relatively prime} if $\a + \b = R$. Two elements $\alpha$, $\beta \in R$ are \emph{relatively prime} if the principal ideals $(\alpha)$, $(\beta)$ are.

The letter $K$ denotes a number field of degree $n > 1$, with discriminant $d_K$ and ring of integers $\O_K$. Let there be $r$ distinct real embeddings $\sigma_1$, $\ldots$, $\sigma_r : K \to \R$ and $2s$ distinct non-real embeddings $\sigma_{r+1}$, $\ldots$, $\sigma_{n} : K \to \C$, such that $\overline{\sigma_{r+j}} = \sigma_{r+s+j}$, for all $1 \leq j \leq s$. Then $\sigma : K \to \R^n$ is the standard embedding given by
\[\alpha \mapsto (\sigma_1(\alpha), \ldots, \sigma_r(\alpha), \Re\sigma_{r+1}(\alpha), \Im\sigma_{r+1}(\alpha), \ldots, \Re\sigma_{r+s}(\alpha), \Im\sigma_{r+s}(\alpha))\text.\]
An element $\alpha \in \O_K$ is called \emph{totally positive}, if $\sigma_i(\alpha) > 0$ for all $1 \leq i \leq r$.

A non-zero ideal of $\O_K$ is called \emph{$m$-free}, if it is not divisible by the $m$-th power of any prime ideal of $\O_K$, and an element $\alpha \in \O_K \setminus \{0\}$ is called \emph{$m$-free}, if the principal ideal $(\alpha)$ is $m$-free. We denote the \emph{absolute norm} of a non-zero ideal $\a$ of $\O_K$ by $\AN\a$, that is $\AN\a = [\O_K : \a]$. For non-zero ideals $\a$, $\b$ of $\O_K$, the ideal $(\a, \b)$ is their greatest common divisor. If $\beta \in \O_K\setminus\{0\}$ then we also write $(\a, \beta)$ instead of $(\a, (\beta))$. By $\supp \a$, we denote the set of all prime divisors of the ideal $\a$ of $\O_K$. The symbol $\mu$ stands for the Möbius function for ideals of $\O_K$. 

For $\underline{x} = (x_1, \ldots, x_n) \in \R^n$, with $x_i \geq 1$ for all $1 \leq i \leq n$, and $x_{r+s+i} = x_{r+i}$, for all $1 \leq i \leq s$, we define
\[\cR(\ul{x}) := \{\alpha \in \O_K \mid \alpha \text{ totally positive, }\left|\sigma_i(\alpha)\right| \leq x_i\text{ for all } 1\leq i \leq n\}\text,\]
and
\[x := x_1 \cdots x_n\text.\]

Let $f \in \O_K[X]$ be an irreducible polynomial of degree $g \geq 1$. For any ideal $\a$ of $\O_K$, let
\[L(\a) := \left|\{\beta + \a \in \O_K/\a \mid f(\beta) \equiv 0 \mod \a \}\right|\text.\]
By the Chinese remainder theorem, we have $L(\a_1\cdots \a_k) = L(\a_1)\cdots L(\a_k)$, for ideals $\a_1$, $\ldots$, $\a_k$ of $\O_K$ that are mutually relatively prime.

We say that the ideal $\a$ of $\O_K$ is a \emph{fixed divisor} of $f$ if $\a$ contains all $f(\alpha)$, for $\alpha \in \O_K$.

Hinz established the following result, asymptotically counting the set of all $\alpha \in \cR(\ul{x})$ such that $f(\alpha)$ is $m$-free:

\begin{theorem}[({\cite[Satz 1.1]{Hinz1982}})]\label{hinz}
If $m \geq \max\{2, \sqrt{2g^2+1}-(g+1)/2\}$, such that no $m$-th power of a prime ideal of $\O_K$ is a fixed divisor of $f$, then
\[\sum_{\substack{\alpha \in \cR(\ul{x})\\f(\alpha)\ m\text{-free}}}1 = \frac{(2\pi)^s}{\sqrt{\left|d_K\right|}}\cdot x\cdot \prod_{\P}\left(1 - \frac{L(\P^m)}{\AN\P^m} \right) + O(x^{1 - u})\text,\]
as $x$ tends to infinity. Here, $u = u(n, g)$ is an effective positive constant depending only on $n$ and $g$, the infinite product over all prime ideals $\P$ of $\O_K$ is convergent and positive, and the implicit $O$-constant depends on $K$, $m$ and $f$. 
\end{theorem}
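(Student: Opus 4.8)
The plan is to prove this by a sieve. For $\alpha\in\O_K\setminus\{0\}$ one has $\sum_{\a^m\mid(f(\alpha))}\mu(\a)=1$ exactly when $(f(\alpha))$ is $m$-free, and $0$ otherwise, so
\[\sum_{\substack{\alpha\in\cR(\ul x)\\ f(\alpha)\ m\text{-free}}}1=\sum_{\a}\mu(\a)\,M(\a),\qquad M(\a):=\#\{\alpha\in\cR(\ul x):f(\alpha)\equiv0\bmod\a^m\}.\]
Since $|\sigma_i(f(\alpha))|\ll_f x_i^{\,g}$ for $x_i\ge1$, any ideal $\a$ contributing here satisfies $\AN\a^m\mid|N_{K/\Q}(f(\alpha))|\ll_f x^{g}$, hence $\AN\a\ll_f x^{g/m}$ and the sum is finite. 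The problem is thus reduced to estimating $M(\a)$ uniformly in $\a$ and to controlling the tail of the sum over $\a$.

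For a fixed $\a$, the residues $\beta\bmod\a^m$ with $f(\beta)\equiv0$ form exactly $L(\a^m)$ classes, each of which is a coset of the lattice $\sigma(\a^m)\subset\R^n$ of covolume $2^{-s}\sqrt{|d_K|}\,\AN\a^m$; and $\sigma(\cR(\ul x))$ is a product of intervals and discs, of volume $\pi^{s}x$, with Lipschitz-parametrisable boundary. A geometry-of-numbers estimate therefore gives
\[M(\a)=\frac{(2\pi)^{s}}{\sqrt{|d_K|}}\cdot x\cdot\frac{L(\a^m)}{\AN\a^m}+O\bigl(E(\a)\bigr),\]
with a boundary error $E(\a)$ controlled by the successive minima of $\sigma(\a^m)$ and the surface measure of $\cR(\ul x)$. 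Summing the main parts over all $\a$ and using multiplicativity of $L$ on coprime ideals (the Chinese remainder theorem) turns $\sum_{\a}\mu(\a)L(\a^m)\AN\a^{-m}$ into $\prod_{\P}(1-L(\P^m)/\AN\P^m)$. For all but finitely many $\P$, the reduction of $f$ modulo $\P$ has only simple roots, which lift uniquely by Hensel's lemma, so there $L(\P^m)=L(\P)\le g$ and hence $L(\P^m)/\AN\P^m\le g/\AN\P^{2}$; this gives absolute convergence of the sum and of the product. No factor vanishes, since $L(\P^m)=\AN\P^m$ would make $\P^m$ contain every $f(\alpha)$, i.e.\ a fixed divisor of $f$, which is excluded; together with convergence this forces the product to be positive.

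For the error term, fix a truncation level $\xi=x^{\delta}$. For $\AN\a\le\xi$ insert the above: extending the sum of main terms to all $\a$ costs only a convergent tail, while $\sum_{\AN\a\le\xi}E(\a)$ is $O(x^{1-\eta_1})$ for a suitable $\eta_1=\eta_1(\delta,n)>0$, where the dependence on the (possibly very skew) shape of $\ul x$ must be tracked through the lattice estimates. For $\AN\a>\xi$, bound the contribution by $\sum_{\xi<\AN\a\ll x^{g/m}}M(\a)$, estimating $M(\a)$ by the number of admissible residue classes times the number of cosets of $\sigma(\a^m)$ meeting $\cR(\ul x)$, and using $\AN\a^m\ll x^{g}$; this is $O(x^{1-\eta_2})$ with $\eta_2=\eta_2(\delta,n,g)>0$. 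Optimising $\delta$ to balance the two exponents, and taking $u(n,g)$ to be the resulting saving, completes the proof, with all constants effective.

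I expect the intermediate range of moduli in the last step to be the main obstacle: ideals $\a$ whose norm is already too large for the boundary error $E(\a)$ to be negligible, yet still too small to be discarded by $\AN\a^m\ll x^{g}$. Getting a power saving there is exactly a square-free-sieve difficulty; it forces one to exploit the divisibility $\a^m\mid(f(\alpha))$ more efficiently than the trivial count — splitting off a prime power, passing to sublattices of larger covolume — and it is the balance between the loss incurred here and the gain available in the main-term range that pins down the quantitative hypothesis $m\ge\max\{2,\sqrt{2g^2+1}-(g+1)/2\}$. A secondary technical nuisance is keeping all implied constants, and the exponent $u$, uniform as the box $\cR(\ul x)$ is allowed to degenerate.
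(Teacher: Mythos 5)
Your outline reproduces the skeleton of Hinz's argument (which is also the skeleton of this paper's proof of Theorem \ref{density}): detect $m$-free values by M\"obius inversion over $\b^m \mid (f(\alpha))$, count solutions of $f(\alpha)\equiv 0 \bmod \a^m$ in $\cR(\ul{x})$ as lattice points in cosets of $\sigma(\a^m)$, reassemble the main term as an Euler product, and split the moduli at a threshold $\xi=x^{\delta}$. The main term, the convergence and positivity of the product, and the small-modulus range are all handled correctly. Note, though, that the paper does not prove this statement at all --- it is quoted from Hinz --- and even in its own Theorem \ref{density} it imports precisely the hard part of Hinz's proof (the bound \eqref{n2est} for $N_2$, i.e.\ Hinz's Lemma 2.2 and pp.~139--145 of that paper) as a black box.

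The genuine gap is the large-modulus range, and it cannot be deferred as a technicality: with the trivial bound $M(\a)\ll L(\a^m)\left(1+x\,\AN\a^{-m}\right)$ and $\sum_{\AN\a\le y}\mu(\a)^2L(\a^m)=O(y)$, the tail $\sum_{\xi<\AN\a\ll x^{g/m}}M(\a)$ is only $O\left(x\,\xi^{1-m}+x^{g/m}\right)$, and $x^{g/m}\ge x$ whenever $m\le g$ --- in particular for $m=g=2$, the only case this paper actually needs. So for all $m\le g$ your argument as written yields no saving at all over the main term, and the hypothesis $m\ge\sqrt{2g^2+1}-(g+1)/2$ (which permits $m=g$, and even $m=g-1$ once $g$ is moderately large) is exactly the record of how far the nontrivial tail estimate reaches. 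You correctly name the missing idea (exploiting $\b^m\mid f(\alpha)$ more efficiently by passing to sublattices of larger covolume) but do not execute it: what is required is an estimate of the shape \eqref{n2est}, namely $N_2(\ul{x},y)=O\bigl(x^{g/(2l+1)}y^{(l-m)/(2l+1)}(xy^{(l-m)/g}+1)\bigr)$ for an intermediate parameter $1\le l\le m-1$, after which one optimises $l$ and the threshold $y=x^{c}$; this is the content of Hinz's Lemma 2.2 and is where the constraint relating $m$ and $g$ is born. Without that argument the theorem remains unproved in exactly the range that matters.
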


A subring $\O$ of $\O_K$ is called an \emph{order} of $K$ if $\O$ is a free $\Z$-module of rank $[K : \Q]$, or, equivalently, $\Q\O = K$. Orders of $K$ are one-dimensional Noetherian domains. For any order $\O$ of $K$, the \emph{conductor} $\f$ of $\O$ is the largest ideal of $\O_K$ that is contained in $\O$, that is
\[\f = \{\alpha \in \O_K \mid \alpha \O_K \subseteq \O\}\text.\]
In particular, $\f \supsetneqq \{0\}$, since $\O_K$ is finitely generated as an $\O$-module. For more information about orders, see for example \cite[Section I.12]{Neukirch1999}.

Assume now that $f \in \O[X]$. Then we define, for any ideal $\a$ of $\O_K$,
\[L_\O(\a) := \left|\{\alpha + (\O \cap \a) \in \O/(\O\cap\a) \mid f(\alpha) \equiv 0 \mod (\O \cap \a)\}\right|\text.\]
The natural monomorphism $\O/(\O \cap \a) \to \O_K/\a$ yields $L_\O(\a) \leq L(\a)$, and if $\a_1$, $\ldots$, $\a_k$ are ideals of $\O_K$ such that all $\a_i \cap \O$ are mutually relatively prime then $L_\O(\a_1\cdots \a_k) = L_\O(\a_1)\cdots L_\O(\a_k)$.

In our generalised version of Theorem \ref{hinz}, we do not count all $\alpha \in \cR(\ul{x})$ such that $f(\alpha)$ is $m$-free, but all $\alpha \in \cR(\ul{x}) \cap \O$, such that $f(\alpha)$ is $m$-free and $\f(\alpha) \notin \P$, for finitely many given prime ideals $\P$ of $\O_K$.

\begin{theorem}\label{density}
Let $\O$ be an order of $K$ of conductor $\f$, and $f \in \O[X]$ an irreducible (over $\O_K$) polynomial of degree $g\geq 1$. Let $\mathcal{P}$ be a finite set of prime ideals of $\O_K$ that contains the set $\mathcal{P}_\f := \supp\f$. Let
\begin{equation}\label{mvsg}m \geq \max\left\{2,\sqrt{2g^2 +1}-(g+1)/2\right\}\end{equation}
be an integer such that no $m$-th power of a prime ideal of $\O_K$ is a fixed divisor of $f$, and denote by $N(\ul{x})$ the number of all $\alpha \in \O \cap \cR(\ul{x})$, such that
\begin{enumerate}
 \item for all $\P \in \mathcal{P}$, $f(\alpha) \notin \P$
 \item $f(\alpha)$ is $m$-free.
\end{enumerate}
Then 
\[N(\ul{x}) = D x + O(x^{1-u})\text,\]
as $x$ tends to infinity. Here, $u = u(n, g)$ is an explicitly computable positive constant that depends only on $n$ and $g$. The implicit $O$-constant depends on $K$, $\mathcal{P}$, $f$ and $m$. Moreover,
\[D = \frac{(2\pi)^s}{\sqrt{\left|d_K\right|}[\O_K : \O]}\sum_{\a \mid \f}\frac{\mu(\a)L_\O(\a)}{[\O : \a \cap \O]}\prod_{\P \in \mathcal{P}\setminus\mathcal{P_\f}}\left(1 - \frac{L(\P)}{\AN\P}\right)\prod_{\P \notin \mathcal{P}}\left(1 - \frac{L(\P^m)}{\AN\P^m} \right)\text.\]
The sum runs over all ideals of $\O_K$ dividing $\f$, and the infinite product over all prime ideals $\P \notin\mathcal{P}$ of $\O_K$ is convergent and positive.
\end{theorem}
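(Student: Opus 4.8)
The plan is to reduce Theorem~\ref{density} to Hinz's Theorem~\ref{hinz} by an inclusion–exclusion over the divisors of the conductor $\f$, together with a sieve that removes the primes in $\mathcal{P}\setminus\mathcal{P}_\f$. First I would parametrise the elements of $\O\cap\cR(\ul x)$ in a way compatible with Theorem~\ref{hinz}. Since $[\O_K:\O]$ is finite, $\O$ is a sublattice of $\O_K$ of index $[\O_K:\O]$; under the standard embedding $\sigma$, the region $\cR(\ul x)$ is (essentially) a box of volume $\asymp x$, so $\#(\O\cap\cR(\ul x))$ and, more importantly, the count of $\alpha\in\O\cap\cR(\ul x)$ lying in a fixed residue class modulo an ideal $\b\cap\O$ of $\O$ both admit asymptotics of the shape $(\text{density})\cdot x+O(x^{1-u})$, with the same exponent $u$ as in Theorem~\ref{hinz} (one applies Theorem~\ref{hinz} with $g=1$, or more simply a lattice-point count, to the restricted region). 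The condition ``$\alpha\in\O$'' is itself a congruence condition modulo $\f$ (more precisely modulo $\f\cap\O$ inside $\O_K$, or one works directly in $\O$), which is why the factor $1/[\O_K:\O]$ and the sum over $\a\mid\f$ will appear.

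Next I would handle the two defining conditions. For condition~(2), ``$f(\alpha)$ is $m$-free'', I write the indicator via Möbius inversion over ideals of $\O_K$: $\sum_{\P^m\mid(f(\alpha))}$ is detected by $\sum_{\b}\mu(\b)[\,\b^m\mid (f(\alpha))\,]$, exactly as in the proof of Theorem~\ref{hinz}; the tail of this sum (large $\b$) is controlled by the same counting estimates that give Hinz's error term, so it contributes to $O(x^{1-u})$. For condition~(1), for each $\P\in\mathcal{P}\setminus\mathcal{P}_\f$ I impose $f(\alpha)\notin\P$ by a single inclusion–exclusion step: the density of $\alpha$ with $f(\alpha)\in\P$ is $L(\P)/\AN\P$, so removing them multiplies the main term by $\prod_{\P\in\mathcal{P}\setminus\mathcal{P}_\f}(1-L(\P)/\AN\P)$; since $\mathcal{P}$ is finite this is a bounded number of steps and costs nothing in the error term. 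For the primes $\P\in\mathcal{P}_\f=\supp\f$, condition~(1) together with $\alpha\in\O$ has to be analysed more carefully, because near these primes $\O$ and $\O_K$ differ; this is where the local factors get replaced by the sum $\sum_{\a\mid\f}\mu(\a)L_\O(\a)/[\O:\a\cap\O]$, which should be read as the ``$\O$-version'' of the product $\prod_{\P\mid\f}(1-L(\P^m)/\AN\P^m)$ corrected by condition~(1) at those primes. One then checks, using $L_\O(\a_1\a_2)=L_\O(\a_1)L_\O(\a_2)$ for $\a_i\cap\O$ coprime and multiplicativity of $[\O:\a\cap\O]$ in the same sense, that the finite sum over $\a\mid\f$ is exactly what the sieve produces.

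The remaining work is to assemble these pieces: substitute the Möbius expansion for the $m$-freeness condition, split the range of the auxiliary ideal $\b$ at a suitable power of $x$, apply the restricted lattice-point count (with its uniform error term) to each of the boundedly-many-in-the-relevant-range congruence classes, and collect the main terms into the Euler product. The convergence and positivity of $\prod_{\P\notin\mathcal{P}}(1-L(\P^m)/\AN\P^m)$ is inherited from Theorem~\ref{hinz} (the extra finitely many omitted factors, and the finitely many factors $1-L(\P)/\AN\P$, are each nonzero — here one uses that no $m$-th power of a prime is a fixed divisor of $f$, and for condition~(1) that $f$ is nonzero modulo each $\P\in\mathcal{P}\setminus\mathcal{P}_\f$, which may require enlarging nothing since $f$ is irreducible of positive degree so $L(\P)<\AN\P$ for all but finitely many $\P$, and for the finitely many bad ones the corresponding factor is simply $0$ if $f\equiv0\bmod\P$, in which case $N(\ul x)=0$ and the formula trivially holds, or $0<1-L(\P)/\AN\P\le 1$ otherwise).

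The main obstacle I expect is the bookkeeping at the primes dividing the conductor: one must simultaneously track (a) the congruence ``$\alpha\in\O$'', (b) the root-counting for $f$ done inside $\O/(\O\cap\a)$ rather than $\O_K/\a$, and (c) condition~(1) at those same primes, and show that the combined local contribution collapses to the stated finite sum $\sum_{\a\mid\f}\mu(\a)L_\O(\a)/[\O:\a\cap\O]$. Everything else is a routine, if lengthy, adaptation of Hinz's sieve, with the key point being that introducing the fixed sublattice $\O$ and finitely many extra linear conditions does not degrade the error exponent $u=u(n,g)$.
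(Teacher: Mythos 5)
Your proposal follows essentially the same route as the paper: Möbius inversion both for the $m$-freeness condition and for the condition at the primes of $\mathcal{P}$, a split of the auxiliary ideal $\b$ at a power of $x$ with the tail handled by Hinz's own estimates, lattice-point counting in the residue classes of the sublattice $\a\b^m\cap\O$ for the head, and multiplicativity of $L_\O$ and of $[\O:\,\cdot\,\cap\O]$ across the conductor (the paper's Lemma~\ref{orders}) to collapse the local contributions into the stated sum over $\a\mid\f$. The one place your sketch is too optimistic is the claim of ``boundedly many'' congruence classes with a uniform per-class error: there are about $x^{c}$ ideals $\b$ in the head range, so the lattice-point error for each class modulo $\a\b^m$ must decay in $\AN\b$ — the paper extracts this from a lower bound on the successive minima of the lattice $\sigma(\a\b^m\cap\O)$ of order $\AN\b^{m/n}$ — in order for the sum over $\b$ to remain $O(x^{1-u})$.
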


For our application, the proof of Theorem \ref{extension}, we only need the special case where $m=g=2$, and we do not need any information about the remainder term. However, the additional effort is small enough to justify a full generalisation of Theorem \ref{hinz}, instead of just proving the special case. The following proposition contains all that we need of Theorem \ref{density} to prove Theorem \ref{extension}.

\begin{proposition}\label{sqfree}
Assume that for every prime ideal of $\O_K$ dividing $2$ or $3$, the relative degree is greater than $1$, and that $\O \neq \O_K$ is an order of $K$. Let $\mathcal{P}$ be a finite set of prime ideals of $\O_K$, and let $\eta \in \O \smallsetminus K^2$. Then there is an element $\omega \in \O_K$ with the following properties:
\begin{enumerate}
 \item $\omega \notin \O$,
 \item for all $\P \in \mathcal{P}$, $\omega^2 - 4\eta \notin \P$, and
 \item $\omega^2 - 4\eta$ is squarefree.
\end{enumerate}
\end{proposition}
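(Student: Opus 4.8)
The plan is to apply Theorem~\ref{density} with $f(X)=X^2-4\eta$ (so $g=2$) and $m=2$, \emph{twice}: once to the given order $\O$, and once to $\O_K$ regarded as an order of itself, whose conductor is $\O_K$. First enlarge $\mathcal P$ to $\mathcal P':=\mathcal P\cup\supp\f$, which contains $\supp\f$ (needed for the $\O$-application) and $\supp\O_K=\emptyset$; a point satisfying property~(2) for $\mathcal P'$ satisfies it for $\mathcal P$ as well. The hypotheses of Theorem~\ref{density} hold: $f$ is irreducible over $\O_K$ because $4\eta$ is not a square in $K$ (as $\eta\notin K^2$), the bound \eqref{mvsg} reads $m\ge\max\{2,\,3-\tfrac32\}=2$, and no $\P^2$ is a fixed divisor of $f$ since otherwise $f(1)-f(0)=1$ would lie in $\P^2$. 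Denote by $N_\O(\ul{x}),N_{\O_K}(\ul{x})$ and $D_\O,D_{\O_K}$ the quantities produced by Theorem~\ref{density} in the two cases, with the common exponent $u=u(n,2)$. Since $\O\cap\cR(\ul{x})\subseteq\O_K\cap\cR(\ul{x})$ and the conditions imposed on the counted elements are identical, $N_{\O_K}(\ul{x})-N_\O(\ul{x})$ counts exactly the $\omega\in\O_K\cap\cR(\ul{x})$ with $\omega\notin\O$, $\omega^2-4\eta$ squarefree, and $\omega^2-4\eta\notin\P$ for all $\P\in\mathcal P'$; it equals $(D_{\O_K}-D_\O)\,x+O(x^{1-u})$. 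Hence it suffices to prove $D_{\O_K}>D_\O$, for then the count is positive for all large $x$ (e.g.\ $\ul{x}=(t,\dots,t)$, $t\to\infty$), and any such $\omega$ is as required.

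To compare the densities, note first that the assumption forces $\AN\P\ge4$ for \emph{every} prime $\P$ of $\O_K$ (for primes over $2$ or $3$ by the residue-degree condition, for the others because the rational prime below is at least $5$); thus $L(\P)\le2<\AN\P$, every finite Euler factor of $D_\O$ and $D_{\O_K}$ is positive, and together with the positivity of the infinite products (Theorem~\ref{density}) this gives $D_{\O_K}>0$. A Möbius inversion identifies $\sum_{\a\mid\f}\mu(\a)L_\O(\a)/[\O:\a\cap\O]$ with $[\O:\f]^{-1}$ times the number of $\alpha+\f\in\O/\f$ with $f(\alpha)\notin\P$ for all $\P\mid\f$, and likewise $\prod_{\P\mid\f}(1-L(\P)/\AN\P)$ with $[\O_K:\f]^{-1}$ times the number of $\beta+\f\in\O_K/\f$ with $f(\beta)\notin\P$ for all $\P\mid\f$. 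Since $[\O_K:\f]=[\O_K:\O][\O:\f]$, the remaining Euler factors cancel and
\[
\frac{D_\O}{D_{\O_K}}=\frac{\#\{\,\alpha+\f\in\O/\f:f(\alpha)\notin\P\ \text{for all }\P\mid\f\,\}}{\#\{\,\beta+\f\in\O_K/\f:f(\beta)\notin\P\ \text{for all }\P\mid\f\,\}}.
\]
The numerator counts a subset of the denominator under $\O/\f\hookrightarrow\O_K/\f$, so $D_{\O_K}>D_\O$ is equivalent to the finite statement: there exists $\beta\in\O_K\setminus\O$ with $\beta^2\not\equiv4\eta\pmod{\P}$ for every prime $\P\mid\f$.

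Producing this $\beta$ is the heart of the proof, and uses the hypotheses essentially. For $\P\mid2$ one has $4\eta\in\P$, so the condition is just $\beta\notin\P$; for $\P\mid3$ it is $\beta^2\not\equiv\eta\pmod{\P}$; in all cases at most two residues modulo $\P$ are forbidden, and at most one when $\P\mid2$. I would fix a prime $\p_0$ of $\O$ dividing $\f$, let $\P_1,\dots,\P_r$ be the primes of $\O_K$ above $\p_0$ (all dividing $\f$) and $e_i=v_{\P_i}(\f)$, and work in $R:=\prod_{i=1}^r\O_K/\P_i^{e_i}$, in which the image $R'$ of $\O$ is the local ring $\O_{\p_0}/\f\O_{\p_0}$, the $\p_0$-component of $\O/\f$; the localisation behaviour of the conductor (cf.\ \cite[Section I.12]{Neukirch1999}) gives $R'\subsetneq R$. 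In each factor $\O_K/\P_i^{e_i}$ the residues whose reduction mod $\P_i$ is not forbidden make up a fraction $1-c_i/\AN\P_i\ge1-2/\AN\P_i\ge\tfrac12$ of it, where $c_i\le2$ (and $c_i\le1$ if $\P_i\mid2$); so the product set $\mathcal A$ of allowed residues in $R$ satisfies $|\mathcal A|/|R'|=[R:R']\prod_{i=1}^r(1-c_i/\AN\P_i)$, and $\mathcal A$ contains a point outside $R'$ as soon as this quantity exceeds $1$.

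The nontrivial ingredient is the lower bound $[R:R']\ge\max\{2,\ |\O/\p_0|^{\,r-1}\}$: the estimate $[R:R']\ge2$ because $R'\subsetneq R$, and $[R:R']\ge|\O/\p_0|^{\,r-1}$ because $R'$, being local with residue field $\O/\p_0$, maps onto the diagonal copy of $\O/\p_0$ inside the product of the $r$ residue fields of $R$, so that $[R:R']\ge[\,\prod_i p_i(R'):R'\,]\ge|\O/\p_0|^{\,r-1}$ upon comparing reductions modulo Jacobson radicals ($p_i$ being the $i$-th projection). Granting this, $[R:R']\prod_i(1-c_i/\AN\P_i)>1$ follows by a short case distinction according to whether $|\O/\p_0|\in\{2,3,4\}$ or $|\O/\p_0|\ge5$; here it is essential that primes over $2$ and $3$ have residue degree $>1$ (so that $\AN\P_i\ge4$ — without this the statement can fail, e.g.\ when $2$ splits into degree-one primes whose product is the conductor) and that primes over $2$ contribute $c_i\le1$ (the case $r=1$ uses only $[R:R']\ge2$). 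Finally, take an allowed residue in $R\setminus R'$ at $\p_0$ and arbitrary allowed residues at the other primes of $\O_K$ dividing $\f$; the Chinese remainder theorem then yields $\beta\in\O_K$ with $\beta\notin\O$ and $\beta^2\not\equiv4\eta\pmod{\P}$ for all $\P\mid\f$, completing the argument. I expect this last combinatorial inequality — and within it the lower bound on $[R:R']$ together with the precise role of the residue-degree hypothesis — to be the only genuine obstacle; the rest is routine bookkeeping.
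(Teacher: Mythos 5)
Your proposal is correct, and while its first half coincides with the paper's, the second half takes a genuinely different route. Like the paper, you apply Theorem \ref{density} with $f=X^2-4\eta$, $m=g=2$, once to $\O$ and once to $\O_K$, and reduce the proposition to $D_\O<D_{\O_K}$. The paper then evaluates the conductor sum as $\prod_i\bigl(1-L_\O(\P_{i,1})/[\O:\p_i]\bigr)$ and proves the resulting numerical inequality directly, using Lemma \ref{index_of_order} to bound $[\O_K:\O]$ from below, the estimate $1-L(\P)/\AN\P>1/2$, and the structural fact \eqref{primedividesf}, with a case split according to whether $\prod\P_{i,j}^{e_{i,j}}$ equals $\f$ or is a proper divisor. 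You instead reinterpret both sides as counts of admissible residues modulo $\f$ in $\O/\f$ and in $\O_K/\f$ (your Möbius inversion is correct, and $[\O_K:\f]=[\O_K:\O][\O:\f]$ makes the indices cancel as claimed), so that $D_{\O_K}>D_\O$ becomes the finite statement that some $\beta\in\O_K\setminus\O$ satisfies $\beta^2-4\eta\notin\P$ for all $\P\mid\f$; you then build $\beta$ locally at one conductor prime via $R'\subsetneq R$ and $[R:R']\geq\max\{2,\,|\O/\p_0|^{\,r-1}\}$. I checked that your sketched case distinction closes: with $\AN\P_i\geq4$ throughout, $c_i\leq1$ for $\P_i\mid2$ and $c_i\leq2$ otherwise, one gets $[R:R']\prod_i(1-c_i/\AN\P_i)\geq 3^r/2^{r+1}$, $7^r/3^{r+1}$, or $(q-2)^r/q$ in the three residue-characteristic cases, all $>1$ for $r\geq2$, while $r=1$ follows from $[R:R']\geq2$ and $1-c_1/\AN\P_1>1/2$. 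Two points deserve to be written out: (i) the assertion $R'\subsetneq R$ for \emph{every} prime $\p_0\supseteq\f$ needs Nakayama's lemma (since $\f(\O_K)_{\p_0}$ lies in the Jacobson radical of $(\O_K)_{\p_0}$, surjectivity of $\O_{\p_0}\to R$ would force $\O_{\p_0}=(\O_K)_{\p_0}$, making $\O_{\p_0}$ a discrete valuation ring and contradicting \cite[Theorem I.12.10]{Neukirch1999}) --- the same input as the paper's proof of \eqref{primedividesf}, but deployed for all conductor primes at once; (ii) the identification of the image of $\O/\f$ in $\prod_{\p}R_\p$ with $\prod_\p R'_\p$, so that a residue outside $R'_{\p_0}$ in one block really yields $\beta\notin\O$, uses the primary decomposition of $\f$ in $\O$. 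What your route buys is the disappearance of the paper's $\Pi=\f$ versus $\Pi\neq\f$ dichotomy and a cleaner conceptual target (exhibit one good residue class); what it costs is the extra bookkeeping of the counting reformulation.
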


The basic idea to prove Theorem \ref{extension} is as follows: Let $\O$ be the ring generated by the units of $\O_K$. With Proposition \ref{sqfree}, we find certain elements $\omega_1$, $\ldots$, $\omega_r$ of $\O_K$, such that $\O[\omega_1, \ldots, \omega_r] = \O_K$. Due to the special properties from Proposition \ref{sqfree}, we can construct an extension field $L$ of $K$, such that $\omega_1$, $\ldots$, $\omega_r$ are sums of units of $\O_L$, and $\O_L$ is generated by units as a ring extension of $\O_K$. This is enough to prove that $\O_L$ is generated by its units as a ring.

\section{Proof of Theorem 3}
We follow the same strategy as Hinz \cite{Hinz1982} in his proof of Theorem \ref{hinz}, with modifications where necessary. For any vector $v \in \R^n$, we denote its Euclidean length by $\left|v\right|$. We use a theorem by Widmer to count lattice points:

\begin{theorem}[({\cite[Theorem 5.4]{Widmer2010}})]\label{counting_lattice_points}
Let $\Lambda$ be a lattice in $\R^n$ with successive minima (with respect to the unit ball) $\lambda_1$, $\ldots$, $\lambda_n$. Let $B$ be a bounded set in $\R^n$ with boundary $\partial B$. Assume that there are $M$ maps $\Phi : [0, 1]^{n-1} \to \R^n$ satisfying a Lipschitz condition
\[\left|\Phi(v) - \Phi(w)\right| \leq L\left|v - w\right|\text,\]
such that $\partial B$ is covered by the union of the images of the maps $\Phi$. Then $B$ is measurable, and moreover
\[\left|\left|B \cap \Lambda\right| - \frac{\Vol B}{\det \Lambda} \right| \leq c_0(n) M \max_{0\leq i<n}\frac{L^i}{\lambda_1\cdots \lambda_i}\text.\]
For $i=0$, the expression in the maximum is to be understood as $1$. Furthermore, one can choose $c_0(n) = n^{3n^2/2}$.
\end{theorem}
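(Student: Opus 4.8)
The plan is to argue by induction on the dimension $n$, peeling off one direction at a time along a reduced basis of $\Lambda$ so that the $i$-th successive minimum governs the $i$-th direction and the factor $L/\lambda_i$ is produced one level at a time; the naive reduction to $\Z^n$ via a single basis matrix would only give the weaker bound $M(L/\lambda_1)^{n-1}$, so the anisotropic treatment is essential. First I would fix, by Minkowski's second theorem, a basis $v_1,\ldots,v_n$ of $\Lambda$ with $\ct{v_i}\le c(n)\lambda_i$ for all $i$; this is where the dependence of the final constant on $n$ first enters. The base case $n=1$ is elementary: a bounded subset of $\R$ whose boundary is covered by the $M$ points that are the images of the maps $\Phi:[0,1]^0\to\R$ is a union of at most $M/2$ intervals, and the number of points of $\Lambda=\lambda_1\Z$ in it differs from its length divided by $\lambda_1=\det\Lambda$ by at most $O(M)$, which is the $i=0$ term.

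For the inductive step I would foliate $\Lambda$ into the layers $\Lambda'+kv_n$, $k\in\Z$, where $\Lambda'=\langle v_1,\ldots,v_{n-1}\rangle$ spans the hyperplane $H=\R v_1+\cdots+\R v_{n-1}$. Writing $h$ for the distance between consecutive affine hyperplanes $H_k:=H+kv_n$, we have $\det\Lambda=h\det\Lambda'$, and for a reduced basis $h\asymp_n\lambda_n$ and $\det\Lambda'\asymp_n\lambda_1\cdots\lambda_{n-1}$. Translating the $k$-th layer into $H$, the points of $B\cap(\Lambda'+kv_n)$ biject with the points of $\Lambda'$ in a slice $B_k\subseteq H$, so the inductive hypothesis in dimension $n-1$ gives
\[\ct{B\cap\Lambda}=\sum_k\ct{B_k\cap\Lambda'}=\frac{1}{\det\Lambda'}\sum_k\Vol_{n-1}(B_k)+\sum_k E_k,\]
with $\ct{E_k}\le c_0(n-1)M_k\max_{0\le i<n-1}L^i/(\lambda_1\cdots\lambda_i)$, where $M_k$ is the number of Lipschitz maps covering $\partial B_k$.

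The two sums must now be controlled within budget. For the main term, the identity $\det\Lambda=h\det\Lambda'$ turns $\tfrac{1}{\det\Lambda'}\sum_k\Vol_{n-1}(B_k)$ into a Riemann sum of step $h$ for $\Vol B=\int\Vol_{n-1}(\text{slice at height }t)\,dt$; its discrepancy is at most $V/\det\Lambda'$, where $V\ll_n ML^{n-1}$ is the total variation of the slice-volume function (bounded through the boundary parametrization, since each $\Phi_j$ has image of $\mathcal H^{n-1}$-measure $\ll_n L^{n-1}$), and since $\det\Lambda'\asymp_n\lambda_1\cdots\lambda_{n-1}$ this contributes exactly the top term $ML^{n-1}/(\lambda_1\cdots\lambda_{n-1})$. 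For the error term, the crucial geometric observation is that each $\Phi_j$ has image of diameter at most $L\sqrt{n}$, hence meets at most $O(L/\lambda_n+1)$ of the equally spaced hyperplanes $H_k$; this bounds $\sum_k M_k\ll_n M(L/\lambda_n+1)$ with no dependence on the diameter of $B$. Multiplying by $\max_{0\le i<n-1}L^i/(\lambda_1\cdots\lambda_i)$ and using the monotonicity $\lambda_{i+1}\le\lambda_n$ to absorb the factor $L/\lambda_n$ as a shift $i\mapsto i+1$ reproduces precisely $\max_{0\le i<n}L^i/(\lambda_1\cdots\lambda_i)$; the induction then closes with $c_0(n)$ growing by a factor $O(n)$ per level, and carrying the reduced-basis and slicing constants through the $n$ levels yields $c_0(n)=n^{3n^2/2}$.

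The main obstacle I anticipate is the supporting slicing lemma, which feeds both estimates above: one must show that cutting the Lipschitz-parametrized surface $\partial B$ by the parallel family $H_k$ yields, for each $k$, a covering of the transverse section $\partial B_k\subseteq H$ by Lipschitz images of $[0,1]^{n-2}$ with the \emph{same} constant $L$, and that the total number $\sum_k M_k$ of pieces so produced is $O_n(M(L/\lambda_n+1))$ rather than something scaling with the extent of $B$. Once the number of boundary-meeting layers is tied to the intrinsic size $L/\lambda_n$ of the Lipschitz pieces, each successive minimum enters exactly once per induction level and the anisotropic bound follows; the remaining work is the explicit bookkeeping of constants needed to reach $c_0(n)=n^{3n^2/2}$.
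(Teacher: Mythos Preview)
The paper does not prove this statement at all: Theorem~\ref{counting_lattice_points} is quoted verbatim from \cite[Theorem 5.4]{Widmer2010} and is used as a black box in the proof of Lemma~\ref{counting_integers}. There is therefore no ``paper's own proof'' to compare your proposal against.

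That said, your sketch is in the right spirit for results of this type: induction on the dimension along a reduced basis, slicing by parallel hyperplanes, and controlling the number of boundary pieces meeting each slice via the Lipschitz diameter is indeed the standard mechanism behind Davenport-style lattice-point counts and is the approach Widmer follows. The step you correctly flag as the main obstacle---producing, for each slice, a Lipschitz parametrisation of $\partial B_k$ with the \emph{same} constant $L$ and bounding $\sum_k M_k$ by $O_n(M(L/\lambda_n+1))$---is exactly the technical heart of the matter; Widmer handles it not by literally slicing the given $\Phi$'s but by reducing to the Lipschitz class in \cite{Widmer2010} and invoking a counting lemma for boundaries in that class. Your bound on the total variation of the slice-volume function via $\mathcal{H}^{n-1}(\partial B)$ is also the right idea but would need the coarea-type argument made precise. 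Finally, the explicit constant $n^{3n^2/2}$ is not recovered by the loose ``$O(n)$ per level'' bookkeeping you describe; getting that particular value requires tracking the constants in Minkowski's second theorem and the slicing step more carefully, as done in \cite{Widmer2010}.
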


We need some basic facts about contracted ideals in orders. The statements of the following lemma can hardly be new, but since the author did not find a reference we shall prove them for the sake of completeness.

\begin{lemma}\label{orders}
Let $\O \subseteq \O_K$ be an order of $K$ with conductor $\f$. Then, for any ideals $\a$, $\b$ of $\O_K$, the following holds:
\begin{enumerate}[(1)]
 \item if $\a + \f = \O_K$ and $\b \mid \f$ then $(\a \cap \O) + (\b \cap \O) = \O$.
 \item if $\a + \f = \O_K$, $\b + \f = \O_K$, and $\a + \b = \O_K$ then $(\a \cap \O) + (\b \cap \O) = \O$.
 \item if $\a + \f = \O_K$ then $[\O : \a \cap \O] = \AN\a$.
\end{enumerate}
\end{lemma}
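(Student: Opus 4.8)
The plan is to work locally whenever possible, using the fact that the conductor $\f$ is the "obstruction" to $\O$ and $\O_K$ agreeing, and that coprimality to $\f$ means we are in a region where the two rings look the same. I would begin with part (3), since it is the most structural and the other two will follow by similar bookkeeping. To prove $[\O : \a \cap \O] = \AN\a$ when $\a + \f = \O_K$, the key observation is that the natural map $\O/(\a \cap \O) \to \O_K/\a$ is injective (this is noted already in the excerpt, in the discussion of $L_\O$); so it suffices to show it is surjective. Surjectivity follows from $\O + \a = \O_K$, which I would deduce from $\a + \f = \O_K$ together with $\f \subseteq \O$: indeed $\O_K = \a + \f \subseteq \a + \O \subseteq \O_K$. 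That gives $\O/(\a\cap\O) \cong \O_K/\a$ as additive groups, hence equal cardinalities, i.e. $[\O : \a\cap\O] = [\O_K : \a] = \AN\a$.

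For part (1), assume $\a + \f = \O_K$ and $\b \mid \f$. I want $(\a \cap \O) + (\b \cap \O) = \O$. The cleanest route is: since $\b \mid \f$ we have $\f \subseteq \b$, so $\b \cap \O \supseteq \f \cap \O = \f$ (using $\f \subseteq \O$). Thus it is enough to show $(\a \cap \O) + \f = \O$. Now from part (3)'s key step, $\a + \O = \O_K \supseteq \f$, and more usefully $\a + \f = \O_K$ means we can write $1 = \alpha + \phi$ with $\alpha \in \a$, $\phi \in \f \subseteq \O$; then $\alpha = 1 - \phi \in \O$ as well, so $\alpha \in \a \cap \O$, and $1 = \alpha + \phi \in (\a\cap\O) + \f \subseteq (\a \cap \O) + (\b \cap \O)$. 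Hence the sum is all of $\O$.

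For part (2), assume $\a + \f = \O_K$, $\b + \f = \O_K$, and $\a + \b = \O_K$. As in part (1), pick $\alpha \in \a \cap \O$ and $\beta' \in \b$ with $\alpha + \beta' = 1$ for a suitable choice — but I need to be careful that the witness of coprimality of $\a$ and $\b$ can be taken inside $\O$. The trick is to multiply: from $\a + \f = \O_K$ get $1 = a_0 + f_0$ with $a_0 \in \a$, $f_0 \in \f$; from $\b + \f = \O_K$ get $1 = b_0 + f_1$ with $b_0 \in \b$, $f_1 \in \f$; and from $\a + \b = \O_K$ get $1 = a_1 + b_1$ with $a_1 \in \a$, $b_1 \in \b$. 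Then consider the element $a_1 f_1 + b_1 f_0 + f_0 f_1 \in \f \subseteq \O$, or more systematically, reduce to part (1) by noting $\b \cap \O$ and $\f$ are comparable after passing to $\O_K/\f$. Concretely: in $\O/\f = \O_K/\f$ (equality by part (3) applied with $\a = \f$, or directly since $\f \subseteq \O$), the images of $\a \cap \O$ and $\b \cap \O$ are the images of $\a$ and $\b$, which are coprime because $\a + \b = \O_K$ surjects onto $\O_K/\f$. So $(\a \cap \O) + (\b \cap \O) + \f = \O$. But $\f \subseteq \b \cap \O$? Not quite — $\f$ need not be contained in $\b$. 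Instead I use part (1) with the pair $(\a, \f)$: since $\a + \f = \O_K$ and trivially $\f \mid \f$, part (1) gives $(\a \cap \O) + \f = \O$; combined with $(\a\cap\O)+(\b\cap\O)+\f = \O$ this is automatic, so that was circular. The correct finish: from $(\a \cap \O) + (\b \cap \O) + \f = \O$ and $\f \subseteq \O$, write $1 = u + v + \phi$ with $u \in \a\cap\O$, $v \in \b \cap \O$, $\phi \in \f$; now use part (1) to absorb $\phi$: since $(\a \cap \O) + \f = \O$, write $\phi = \phi_1 \cdot 1 = \phi(u' + \phi')$... — cleaner is to note $\phi \in \f \subseteq \a\cap\O$ is false, so instead: by part (1), $\f \subseteq (\a\cap\O) + (\b\cap\O)$ is what we get from $(\a\cap\O)+(\b\cap\O) \supseteq (\a \cap \O) + \f = \O \supseteq \f$ — wait, that shows the sum is already $\O$. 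So the real argument is simply: by part (1) applied to $(\a,\f)$, $(\a\cap\O)+\f = \O$, hence $(\a\cap\O)+(\b\cap\O) \supseteq (\a\cap\O) + (\f \cap (\b\cap\O))$, and one shows $\f \subseteq \b$ is not needed because we only need the sum to exhaust $\O$ using the coprimality mod $\f$.

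\emph{Main obstacle.} The genuinely delicate point is part (2): ensuring that the coprimality witnesses can be chosen simultaneously inside the subring $\O$, since $\a \cap \O$ can be strictly smaller than $\a$ and a priori the Bézout identity $1 = a + b$ need not have $a, b \in \O$. I expect the clean resolution is to reduce everything modulo $\f$, where $\O/\f = \O_K/\f$, observe that all three coprimality hypotheses descend to coprimality of the corresponding ideals in the finite ring $\O_K/\f$, apply CRT there, and then lift back, using part (1) (equivalently, the identity $(\a\cap\O)+\f = \O$) to handle the error term landing in $\f$. Parts (1) and (3) are essentially immediate from $\f \subseteq \O$ and the injectivity of $\O/(\a\cap\O) \hookrightarrow \O_K/\a$; part (2) is where the argument has to be assembled with care.
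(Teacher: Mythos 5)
Your parts (1) and (3) are correct and essentially identical to the paper's argument: (3) via surjectivity of the natural injection $\O/(\a\cap\O)\to\O_K/\a$, which follows from $\O_K=\a+\f\subseteq\a+\O$, and (1) via $\f\subseteq\b\cap\O$ together with the identity $(\a\cap\O)+\f=\O$ (your witness argument $1=\alpha+\phi$, $\alpha=1-\phi\in\a\cap\O$, is exactly the paper's justification of $(\a\cap\O)+\f=(\a+\f)\cap\O$).

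Part (2), which you yourself flag as the delicate point, is not actually proved. First, your reduction modulo $\f$ rests on the assertion $\O/\f=\O_K/\f$, which is false in general (take $\O=\Z[2i]\subseteq\Z[i]$, with conductor $\f=2\Z[i]$: then $\O/\f$ has $2$ elements and $\O_K/\f$ has $4$); part (3) does not apply with $\a=\f$, since $\f+\f\neq\O_K$. Second, and more importantly, what that reduction would deliver, namely $(\a\cap\O)+(\b\cap\O)+\f=\O$, is --- as you notice yourself --- already automatic from $(\a\cap\O)+\f=\O$ and uses nothing about $\b$; the step of ``absorbing the error term $\phi\in\f$ using part (1)'' is precisely the missing content, because part (1) only ever re-introduces a term lying in $\f$. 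What you actually need, and never establish, is $\f\subseteq(\a\cap\O)+(\b\cap\O)$. This does follow in one line from the hypothesis you have not yet used: $\f=\f(\a+\b)=\f\a+\f\b$, and $\f\a\subseteq\a\cap\O$, $\f\b\subseteq\b\cap\O$ because $\f\O_K\subseteq\O$ by definition of the conductor; then $(\a\cap\O)+(\b\cap\O)\supseteq(\a\cap\O)+\f=\O$. The paper closes the gap differently: it first proves that $\a\mapsto\a\cap\O$ and $\c\mapsto\c\O_K$ are mutually inverse, multiplicative bijections between the ideals of $\O_K$ and of $\O$ prime to $\f$; then $\c:=(\a\cap\O)+(\b\cap\O)$ satisfies $\c+\f=\O$, hence $\c=\d\cap\O$ for $\d=\c\O_K$, and $\a,\b\subseteq\d$ forces $\d=\O_K$, so $\c=\O$. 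Either repair works, but as written your argument for (2) has a genuine gap.
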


\begin{proof}
For any ideal $\a$ of $\O_K$ with $\a + \f = \O_K$, we have
\[(\a \cap \O) + \f = (\a + \f) \cap \O = \O_K \cap \O = \O\text.\]
The first equality holds because for every $\alpha \in \a$, $\beta \in \f \subseteq \O$ with $\alpha + \beta \in \O$ it follows that $\alpha \in \O$.

Moreover, if $\c$ is an ideal of $\O$ with $\c + \f = \O$ then
\[\c \O_K + \f \supseteq (\c + \f)\O_K = \O \O_K = \O_K\text.\]

Therefore,
\[\varphi : \a \mapsto \a \cap \O\text{ and }\psi : \c \mapsto \c\O_K\]
are maps between the sets of ideals 
\[\{\a \subseteq \O_K \mid \a + \f = \O_K\} \text{ and } \{\c \subseteq \O \mid \c +\f = \O\}\text.\]

Let us prove that $\varphi$ and $\psi$ are inverse to each other. Clearly, $(\varphi \circ \psi)(\c) \supseteq \c$ and $(\psi \circ \varphi)(\a) \subseteq \a$. Also,
\[(\varphi\circ\psi)(\c) = (\c\O_K \cap \O)\O = (\c\O_K\cap\O)(\c+\f)\subseteq\c + \f(\c\O_K \cap \O) \subseteq \c+\c\f\O_K \subseteq \c\text,\]
and
\[\a = \a\O = \a((\a\cap\O) + \f) \subseteq (\a\cap\O)\O_K + \f\a \subseteq(\a\cap\O)\O_K + (\a \cap \O) = (\psi\circ\varphi)(\a)\text.\]

Clearly, $\varphi$ and $\psi$ are multiplicative, so the monoid of ideals of $\O$ relatively prime to $\f$ is isomorphic with the monoid of ideals of $\O_K$ relatively prime to $\f$. (In the special case where $\O$ is an order in an imaginary quadratic field this is proved in \cite[Proposition 7.20]{Cox1989}.)

If $\a$, $\b$ are as in \emph{(1)} then $\f \subseteq \b \cap \O$, and thus $\O = (\a \cap \O) + \f \subseteq (\a \cap \O) + (\b \cap \O)$.

Suppose now that $\a$, $\b$ are as in \emph{(2)}, and $\varphi(\a) + \varphi(\b) =: \c \subseteq \O$. Then $\c + \f \supseteq \varphi(\a) + \f = \O$, whence $\c = \varphi(\mathfrak{d})$, for some ideal $\mathfrak{d}$ of $\O_K$ relatively prime to $\f$. Now $\a \subseteq \mathfrak{d}$ and $\b \subseteq \mathfrak{d}$, so $\mathfrak{\d} = \O_K$, and thus $\c = \O$.

To prove \emph{(3)}, we show that the natural monomorphism $\Phi : \O/(\a \cap \O) \to \O_K/\a$ is surjective. This holds true, since
\[\O_K = \a + \f \subseteq \a + \O\text.\]
\end{proof}

For now, let us prove Theorem \ref{density} with the additional assumption that $f(\alpha) \neq 0$ for all totally positive $\alpha \in \O_K$. This holds of course if $\deg f \geq 2$, since $f$ is irreducible over $\O_K$. At the end of the proof, we specify the changes necessary to drop this assumption. Let
\[\Pi := \prod_{\P \in \mathcal{P}}\P\text.\]
It is well known that 
\[\sum_{\a \mid \b}\mu(\a) = \begin{cases}1, &\text{ if } \b = \O_K\\0, &\text{ otherwise,} \end{cases}\]
for any nonzero ideal $\b$ of $\O_K$. Assume that $f(\alpha) \neq 0$. Then 
\[\sum_{\a \mid (\Pi, f(\alpha))}\mu(\a) = \begin{cases}1, &\text{ if for all } \P\in \mathcal{P},\ f(\alpha) \notin \P \\0, &\text{ otherwise.} \end{cases}\]
Write $(f(\alpha)) = \c_1 \c_2^m$, where $\c_1$ is $m$-free. Then $\b^m \mid f(\alpha)$ if and only if $\b \mid \c_2$, whence
\[\sum_{\b^m \mid f(\alpha)}\mu(\b) = \begin{cases}1, &\text{ if $f(\alpha)$ is $m$-free}\\0, &\text{ otherwise.} \end{cases}\]
Therefore,
\begin{equation}\label{n}N(\ul{x}) = \sum_{\alpha \in \cR(\ul{x}) \cap \O}\sum_{\a \mid (\Pi, f(\alpha))}\mu(\a)\sum_{\b^m \mid f(\alpha)}\mu(\b)\text.\end{equation}
Put 
\begin{equation}\label{n1}N_1(\ul{x}, y) := \sum_{\alpha \in \cR(\ul{x}) \cap \O}\sum_{\a \mid (\Pi, f(\alpha))}\mu(\a)\sum_{\substack{(\b,\Pi)=1\\\b^m \mid f(\alpha)\\\AN\b \leq y}}\mu(\b)\text,\end{equation}
and
\begin{equation}\label{n2}N_2(\ul{x}, y) := \sum_{\alpha \in \cR(\ul{x}) \cap \O}\sum_{\a \mid (\Pi, f(\alpha))}\mu(\a)\sum_{\substack{\b^m \mid f(\alpha)\\\AN\b > y}}\mu(\b)\text.\end{equation}
It will turn out that, with a suitable choice of $y$, the main component of $N(\ul{x})$ is $N_1(\ul{x}, y)$. In fact, since
\[\sum_{\a\mid(\Pi, f(\alpha))}\mu(\a)\sum_{\substack{(\b, \Pi) \neq 1\\\b^m \mid f(\alpha)\\\AN\b \leq y}}\mu(\b) = 0\text,\]
for all $\alpha \in \O_K$ with $f(\alpha)\neq 0$, we have
\begin{equation}\label{splitn}N(\ul{x}) = N_1(\ul{x}, y) + N_2(\ul{x}, y)\text.\end{equation}

\subsection{Estimation of $N_2(\ul{x}, y)$}\label{section_n2est}

We can reduce the estimation of $N_2(\ul{x}, y)$ to a similar computation to that which has already been performed by Hinz \cite{Hinz1982}. Indeed, for any nonzero ideal $\q$ of $\O_K$, we have
\begin{align*}
\left|N_2(\ul{x}, y)\right| &\leq \sum_{\alpha \in \cR(\ul{x}) \cap \O}\big|\sum_{\a \mid (\Pi, f(\alpha))}\mu(\a)\big|\cdot\big|\sum_{\substack{\b^m \mid f(\alpha)\\\AN\b>y}}\mu(\b)\big|\\
&\leq\big(\sum_{\a \mid \Pi}\mu(\a)^2\big)\sum_{\alpha \in \cR(\ul{x})}\big|\sum_{\c \mid \q}\sum_{\substack{\b^m\mid f(\alpha)\\\AN\b>y\\(\b, \q)=\c}}\mu(\b)\big|\\
&\leq\AN\Pi\AN\q\sum_{\alpha \in \cR(\ul{x})}\sum_{\substack{\b^m\mid f(\alpha)\\\AN\b>y/\AN\q\\(\b, \q)=1}}\mu(\b)^2\text.
\end{align*}
The last expression differs only by a multiplicative constant from the right-hand side of \cite[(2.6)]{Hinz1982}, so we can use Hinz's estimates \cite[pp. 139-145]{Hinz1982} without any change. With a suitable choice of $\q$ (\cite[(2.8)]{Hinz1982}), we get (see Lemma 2.2 and the proof of Theorem 2.1 from \cite{Hinz1982})
\begin{equation}\label{n2est}N_2(\ul{x}, y) = O(x^{g/(2l+1)} y^{(l-m)/(2l+1)} (x y^{(l-m)/g} + 1))\text,\end{equation}
for any integer $1 \leq l \leq m-1$, as $x$, $y \to \infty$. The implicit $O$-constant depends on $K$, $f$, $m$, and $\mathcal{P}$.

\subsection{Computation of $N_1(\ul{x}, y)$}

Now let us compute $N_1(\ul{x}, y)$. We have
\begin{equation}\label{n1comp1}N_1(\ul{x}, y) = \sum_{\a \mid \Pi}\mu(\a)\sum_{\substack{(\b, \Pi)=1\\\AN\b\leq y}}\mu(\b)\left|M_{\a, \b}(\ul{x}) \right|\text,\end{equation}
where $M_{\a, \b}(\ul{x})$ is the set of all $\alpha \in \cR(\ul{x})\cap\O$ such that $f(\alpha)\in \a$ and $f(\alpha)\in \b^m$. Since all occurring ideals $\a$, $\b$ are relatively prime, we have
\begin{align*}
M_{\a, \b}(\ul{x}) &= \{\alpha \in \cR(\ul{x}) \cap \O \mid f(\alpha) \equiv 0 \mod \a\b^m\} \\&= \bigcup_{\substack{\beta + \a\b^m \in \O_K/\a\b^m\\f(\beta) \equiv 0 \mod \a\b^m}}\left((\beta + \a\b^m) \cap \cR(\ul{x}) \cap \O \right)\text,
\end{align*}
where the union over all roots of $f$ modulo $\a\b^m$ is disjoint. We asymptotically count each of the sets $(\beta + \a\b^m) \cap \cR(\ul{x}) \cap \O$ by counting lattice points. Consider the natural monomorphism $\varphi : \O/(\a\b^m \cap \O) \to \O_K/\a\b^m$, mapping $\alpha + (\a\b^m\cap\O)$ to $\alpha + \a\b^m$. 

\begin{lemma}\label{counting_integers}
The set $(\beta + \a\b^m) \cap \O$ is not empty if and only if $\beta + \a\b^m$ is in the image of $\varphi$.

In that case, let $\varepsilon \in [0, 1/n]$, and $c \geq 1/m$ such that $\AN\b \leq x^c$. Then
\[\left|\left|(\beta + \a\b^m)\cap\cR(\ul{x})\cap\O\right| - c_1(K)\frac{x}{[\O_K : \a\b^m \cap \O]}\right| \leq c_2(K)\frac{x^{1 -\varepsilon}}{\AN\b^{(1 - \varepsilon)/c}}\text.\]
Here, $c_1(K) = (2\pi)^s/\sqrt{\left|d_K\right|}$, and $c_2(K)$ is an explicitly computable constant which depends only on $K$.
\end{lemma}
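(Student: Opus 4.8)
The equivalence in the first assertion is immediate: if $\alpha\in(\beta+\a\b^m)\cap\O$ then $\alpha+(\a\b^m\cap\O)$ is a preimage of $\beta+\a\b^m$ under $\varphi$, and conversely any preimage of $\beta+\a\b^m$ is represented by some $\alpha\in\O$ with $\alpha\equiv\beta\pmod{\a\b^m}$. Assume from now on that the set is nonempty and fix $\alpha_0\in(\beta+\a\b^m)\cap\O$; then $(\beta+\a\b^m)\cap\O=\alpha_0+(\a\b^m\cap\O)$, so that
\[(\beta+\a\b^m)\cap\cR(\ul x)\cap\O=\bigl(\alpha_0+(\a\b^m\cap\O)\bigr)\cap\cR(\ul x)\text.\]
Via the embedding $\sigma$ this turns into a lattice-point count: it is the number of points of the shifted lattice $\sigma(\alpha_0)+\Lambda$, where $\Lambda:=\sigma(\a\b^m\cap\O)$, lying in the region $B:=\sigma(\cR(\ul x))\subseteq\R^n$. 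Here $B$ is cut out by $0<v_i\le x_i$ in the $r$ real coordinates and by $v_{r+2j-1}^2+v_{r+2j}^2\le x_{r+j}^2$ in the $j$-th complex coordinate plane, so $\Vol B=\pi^s\prod_{i=1}^r x_i\prod_{j=1}^s x_{r+j}^2=\pi^s x$; and since $\a\b^m\cap\O$ is a subgroup of $\O_K$ of finite index $[\O_K:\a\b^m\cap\O]$, we have $\det\Lambda=[\O_K:\a\b^m\cap\O]\cdot 2^{-s}\sqrt{\left|d_K\right|}$, whence the expected main term is $\Vol B/\det\Lambda=c_1(K)\,x/[\O_K:\a\b^m\cap\O]$.

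The plan is to estimate the count with Theorem~\ref{counting_lattice_points}, but applying it directly to $\Lambda$ and $B$ would be hopelessly lossy, since $\partial B$ has diameter of order $\max_i x_i$, which may be as large as $x$. I would therefore first normalise: let $T\colon\R^n\to\R^n$ be the diagonal linear map scaling the $i$-th real coordinate by $1/x_i$ and the $j$-th complex coordinate plane by $1/x_{r+j}$, so that $\det T=1/x$ and $TB=B^*$, the fixed region obtained from $B$ by replacing every $x_i$ with $1$; then $\Vol B^*=\pi^s$, and $\partial B^*$ is covered by a number of maps $[0,1]^{n-1}\to\R^n$ bounded in terms of $n$, each with a Lipschitz constant bounded in terms of $n$. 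Since $\sigma$ and $T$ are injective, our count equals $\left|T\Lambda\cap(B^*-T\sigma(\alpha_0))\right|$, and Theorem~\ref{counting_lattice_points}, applied to the lattice $T\Lambda$ (of determinant $\det\Lambda/x$) and the translate $B^*-T\sigma(\alpha_0)$ of $B^*$, gives the estimate with main term $c_1(K)\,x/[\O_K:\a\b^m\cap\O]$ and error $O\bigl(\max_{0\le i<n}(\lambda_1^*\cdots\lambda_i^*)^{-1}\bigr)$, where $\lambda_1^*\le\dots\le\lambda_n^*$ are the successive minima of $T\Lambda$ and the implied constant depends only on $n$ (it absorbs $c_0(n)$ and the bounded powers of the bounded Lipschitz constants).

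It then remains to bound the successive minima of $T\Lambda$ from below, and since they are non-decreasing it suffices to bound $\lambda_1^*$. A nonzero vector of $T\Lambda$ has the form $T\sigma(\gamma)$ with $\gamma\in\a\b^m\cap\O$ nonzero; expanding $\left|T\sigma(\gamma)\right|^2$, applying the arithmetic--geometric mean inequality, and using $\prod_{i=1}^r x_i\prod_{j=1}^s x_{r+j}^2=x$ yields $\left|T\sigma(\gamma)\right|\ge\sqrt{n/2}\,\bigl(\left|N_{K/\Q}(\gamma)\right|/x\bigr)^{1/n}$; moreover $\gamma\O_K\subseteq\a\b^m$ forces $\left|N_{K/\Q}(\gamma)\right|=\AN(\gamma\O_K)\ge\AN(\a\b^m)\ge\AN\b^m$. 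Hence $\lambda_1^*\ge\sqrt{n/2}\,(\AN\b^m/x)^{1/n}$ and the error above is $O\bigl(\max_{0\le i<n}(x/\AN\b^m)^{i/n}\bigr)$, again with implied constant depending only on $n$. A short case distinction now concludes. If $\AN\b^m\le x$, the maximum equals $(x/\AN\b^m)^{(n-1)/n}$, and since $\varepsilon\le 1/n$ and $c\ge 1/m$,
\[(x/\AN\b^m)^{(n-1)/n}\le(x/\AN\b^m)^{1-\varepsilon}=x^{1-\varepsilon}\AN\b^{-m(1-\varepsilon)}\le x^{1-\varepsilon}\AN\b^{-(1-\varepsilon)/c}\text;\]
if $\AN\b^m>x$, the maximum equals $1$, and $\AN\b\le x^c$ gives $1\le x^{1-\varepsilon}\AN\b^{-(1-\varepsilon)/c}$. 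Either way the error is at most $c_2(K)\,x^{1-\varepsilon}/\AN\b^{(1-\varepsilon)/c}$ with $c_2(K)$ depending only on $n$, hence only on $K$.

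The main obstacle is the normalisation step: without rescaling to $B^*$, Theorem~\ref{counting_lattice_points} contributes an error containing $(\max_i x_i)^{n-1}$, which is worthless. Once that is in place, the lower bound for $\lambda_1^*$ is a routine Minkowski-type estimate, and the closing case distinction is exactly what dictates the particular shape $x^{1-\varepsilon}/\AN\b^{(1-\varepsilon)/c}$ of the error term, chosen so that the lemma can later be summed effectively over $\b$ in the computation of $N_1(\ul x,y)$.
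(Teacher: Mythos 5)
Your proof is correct and follows essentially the same route as the paper: a diagonal rescaling of the standard embedding, Widmer's lattice-point theorem applied to the rescaled lattice and box, and an AM--GM/norm argument giving $\lambda_1\gtrsim(\AN\b^m/x)^{1/n}$, followed by the same exponent bookkeeping using $\varepsilon\le 1/n$ and $c\ge 1/m$. The only (immaterial) difference is the normalisation: you scale the box to a fixed unit-size region so the Lipschitz constants are $O_n(1)$, whereas the paper scales to determinant one so the box has side $x^{1/n}$ and the factor $x^{i/n}$ appears explicitly in the error; the two are identical up to relabelling.
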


\begin{proof}
If $\alpha \in (\beta + \a\b^m) \cap \O$ then $\beta + \a\b^m = \alpha + \a\b^m = \varphi(\alpha + (\a\b^m\cap \O))$. If, on the other hand, $\beta + \a\b^m = \varphi(\alpha + (\a\b^m\cap\O))$, for some $\alpha \in \O$, then $\alpha + \a\b^m = \beta + \a\b^m$, and thus $\alpha \in (\beta + \a\b^m) \cap \O$.

Assume now that $(\beta + \a\b^m) \cap \O$ is not empty. Then, for any $\alpha \in (\beta + \a\b^m) \cap \O$, we have
\[\left|(\beta + \a\b^m)\cap\cR(\ul{x})\cap\O\right| = \left|(\a\b^m \cap \O) \cap (\cR(\ul{x}) - \alpha)\right|\text.\]
Let $\sigma : K \to \R^n$ be the standard embedding defined in Section \ref{section_notation}, and let $T : \R^n \to \R^n$ be the linear automorphism given by
\begin{align*}
T(e_i) &= x^{1/n}/x_i \cdot e_i\text{, for $1 \leq i \leq r$, and }\\
T(e_{r+i}) &= x^{1/n}/x_{r+\lceil i/2\rceil}\cdot e_{r + i}\text{, for $1 \leq i \leq 2s$}\text,
\end{align*}
where $e_1$, $\ldots$, $e_n$ is the standard basis of $\R^n$. Then
\begin{equation}\label{dett}\det T = x/(x_1\cdots x_r x_{r+1}^2 \cdots x_{r+s}^2) = x/(x_1 \cdots x_n) = 1\text.\end{equation}
Therefore,  $T(\sigma(\a\b^m \cap \O))$ is a lattice in $\R^n$ with determinant 
\begin{equation}\label{det}\det T(\sigma(\a\b^m \cap \O)) = 2^{-s}\sqrt{\left|d_K\right|}[\O_K : \a\b^m \cap \O]\text.\end{equation}
Moreover, $T(\sigma(\cR(\ul{x}) - \alpha)) = T(\sigma(\O_K)) \cap B$, where $B$ is a product of $r$ line segments of length $x^{1/n}$ and $s$ disks of radius $x^{1/n}$. Clearly,
\begin{equation}\label{vol}\Vol(B) = \pi^s x\text.\end{equation} 
We construct maps $\Phi : [0,1]^{n-1} \to \R^n$ as in Theorem \ref{counting_lattice_points}. Write $B = l_1 \times \cdots \times l_r \times d_{r+1} \times \cdots \times d_{r+s}$, with line segments $l_i$ of length $x^{1/n}$ and disks $d_i$ of radius $x^{1/n}$. Put 
\[B_i := l_1 \times \cdots \times l_{i-1} \times (\partial l_i) \times l_{i+1} \times \cdots \times l_r \times d_{r+1} \times \cdots \times d_{r+s}\text,\]
for $1 \leq i \leq r$, and
\[B_i := l_1 \times \cdots \times l_r \times d_{r+1} \times \cdots \times d_{i-1} \times (\partial d_i) \times d_{i+1} \times \cdots \times d_{r+s}\text,\]
for $r+1 \leq i \leq r+s$. Then
\[\partial B = \bigcup_{i=1}^{r+s} B_i\text.\]
For $1 \leq i \leq r$, $\partial l_i$ consists of two points, and the remaining factor of $B_i$ is contained in an $(n-1)$-dimensional cube of edge-length $2x^{1/n}$. For $r+1 \leq i \leq r+s$, $\partial d_i$ is a circle of radius $x^{1/n}$, and the remaining factor of  $B_i$ is contained in an $(n-2)$-dimensional cube of edge-length $2x^{1/n}$. Therefore, we find $2r + s$ maps $\Phi : [0, 1]^{n-1} \to \R^n$ with
\begin{equation}\label{lipschitz}\left|\Phi(v) - \Phi(w)\right| \leq 2 \pi x^{1/n} \left|v - w\right|\text,\end{equation}
such that $\partial B$ is covered by the union of the images of the maps $\Phi$.

Since
\begin{align*}
\left|(\beta + \a\b^m)\cap\cR(\ul{x})\cap\O\right| &= \left|T(\sigma(\a\b^m \cap \O)) \cap T(\sigma(\cR(\ul{x}) - \alpha))\right|\\&= \left|T(\sigma(\a\b^m \cap \O)) \cap B\right|\text,
\end{align*}
Theorem \ref{counting_lattice_points} and \eqref{det}, \eqref{vol}, \eqref{lipschitz} yield
\begin{equation}\label{lattice_point_estimate}\left|\left|(\beta + \a\b^m)\cap\cR(\ul{x})\cap\O\right| - \frac{(2\pi)^s}{\sqrt{\left|d_K\right|}} \frac{x}{[\O_K : \a\b^m \cap \O]}\right| \leq c_3(K)\frac{x^{i/n}}{\lambda_1 \cdots \lambda_i}\text.\end{equation}
Here, $c_3(K) = (2r+s)(2\pi)^{n-1}n^{3n^2/2}$, $i \in \{0, \ldots, n-1\}$, and $\lambda_1$, $\ldots$, $\lambda_i$ are the first $i$ successive minima of the lattice $T(\sigma(\a\b^m\cap\O))$ with respect to the unit ball.

Let us further estimate the right-hand side of \eqref{lattice_point_estimate}. First, we need a lower bound for $\lambda_i$ in terms of $\AN\b$. For each $i$, there is some $\alpha \in (\a\b^m \cap \O) \smallsetminus\{0\}$ with $\lambda_i = \left|T(\sigma(\alpha))\right|$. Since $\alpha \in \b^m$, the inequality of weighted arithmetic and geometric means and \eqref{dett} yield (cf. \cite[Lemma 5]{Masser2006}, \cite[Lemma 9.7]{Widmer2010})
\begin{align*}
\AN\b^m &\leq \left|N(\alpha)\right| = \prod_{j=1}^n \left|\sigma_j(\alpha)\right| = \prod_{j=1}^{r+s} \left|\frac{x^{1/n}}{x_j} \sigma_j(\alpha)\right|^{d_j}\\&\leq \left(\frac{1}{n} \sum_{j=1}^{r+s}d_j\left|\frac{x^{1/n}}{x_j}\sigma_j(\alpha)\right|^2\right)^{n/2}\leq \left(\frac{2}{n}\right)^{n/2}\lambda_i^n\text.
\end{align*}
Here, $d_j = 1$ for $1\leq j \leq r$, and $d_j = 2$ for $r+1 \leq j \leq r+s$. Recall that $n \geq 2$. With the assumptions on $\varepsilon$ and $c$ in mind, we get
\begin{align*}\frac{x^{i/n}}{\lambda_1 \cdots \lambda_i} &\leq \left(\frac{2}{n}\right)^{i/2} \frac{x^{i/n}}{\AN\b^{mi/n}} \leq  \frac{x^{1-\varepsilon}}{\AN\b^{mi/n + (1 - \varepsilon - i/n)/c}} \leq \frac{x^{1-\varepsilon}}{\AN\b^{(1 - \varepsilon)/c}}\text.
\end{align*}
\end{proof}

Since $f \in \O[X]$, we can conclude from $\beta + \a\b^m = \varphi(\alpha + (\a\b^m \cap \O))$ that $f(\beta)\in\a\b^m$ if and only if $f(\alpha) \in \a\b^m \cap \O$. Therefore,
\[M_{\a, \b}(\ul{x}) = \bigcup_{\substack{\alpha + (\a\b^m\cap\O) \in \O/(\a\b^m\cap\O)\\f(\alpha) \equiv 0 \mod (\a\b^m\cap\O)}}\left((\alpha + \a\b^m) \cap \O \cap \cR(\ul{x})\right)\text,\]
and thus
\[\left|\left|M_{\a,\b}(\ul{x})\right| - c_1(K) L_\O(\a\b^m) \frac{x}{[\O_K : \a\b^m \cap \O]}\right| \leq c_2(K) L(\a)L(\b^m)\frac{x^{1-\varepsilon}}{\AN\b^{(1 - \varepsilon)/c}}\text,\]
whenever $\AN\b \leq x^c$, for some $c \geq 1/m$, and $\varepsilon \in [0, 1/n]$. Notice that $L_\O(\a\b^m) \leq L(\a\b^m) = L(\a)L(\b^m)$, since $\a$, $\b$ are relatively prime. Therefore,
\begin{align*}
&\big|\sum_{\substack{(\b, \Pi) = 1\\\AN\b\leq x^{c}}}\mu(\b)\left|M_{\a,\b}(\ul{x})\right| - c_1(K)x\sum_{(\b,\Pi)=1}\mu(\b)\frac{L_\O(\a\b^m)}{[\O_K : \a\b^m \cap \O]}\big|\\
\leq\ &\big|\sum_{\substack{(\b, \Pi) = 1\\\AN\b\leq x^{c}}}\mu(\b)\left(\left|M_{\a,\b}(\ul{x})\right|-c_1(K) x \frac{L_\O(\a\b^m)}{[\O_K : \a\b^m \cap \O]}\right)\big| \\
+\ &\big|c_1(K) x \sum_{\substack{(\b, \Pi) = 1\\\AN\b > x^{c}}}\mu(\b)\frac{L_\O(\a\b^m)}{[\O_K : \a\b^m \cap \O]}\big|\\
\leq\ &c_2(K) x^{1 - \varepsilon} L(\a) \sum_{(\b,\Pi)=1}\mu(\b)^2\frac{L(\b^m)}{\AN\b^{(1 - \varepsilon)/c}}\\
+\ &c_1(K) L(\a) x \sum_{\substack{(\b,\Pi)=1\\\AN\b>x^{c}}}\mu(\b)^2 \frac{L(\b^m)}{[\O_K : \a\b^m \cap \O]}\text.
\end{align*}
Let $s>1$ be a real number. As in \cite[top of p. 138]{Hinz1982}, we get
\[\sum_{\AN\b \leq y}\mu(\b)^2 L(\b^m) = O(y)\text,\]
whence
\[\sum_{\substack{(\b,\Pi)=1\\\AN\b>x^{c}}}\mu(\b)^2 \frac{L(\b^m)}{\AN\b^s} = O(x^{c(1-s)})\text,\]
by partial summation. Therefore, the sum
\[\sum_{(\b,\Pi)=1}\mu(\b)^2\frac{L(\b^m)}{\AN\b^{(1 - \varepsilon)/c}}\]
converges whenever $c < 1 - \varepsilon$. Since $[\O_K : \a\b^m \cap \O] \geq \AN\b^m$, we have
\[\sum_{\substack{(\b,\Pi)=1\\\AN\b>x^{c}}}\mu(\b)^2 \frac{L(\b^m)}{[\O_K : \a\b^m \cap \O]} \leq \sum_{\substack{(\b,\Pi)=1\\\AN\b>x^{c}}}\mu(\b)^2 \frac{L(\b^m)}{\AN\b^m} = O(x^{c(1-m)})\text.\]
Putting everything together, we get 
\begin{equation}\label{n1comp2}\begin{split}
\sum_{\substack{(\b, \Pi) = 1\\\AN\b\leq x^{c}}}\mu(\b)\left|M_{\a,\b}(\ul{x})\right| &= c_1(K)x\sum_{(\b,\Pi)=1}\mu(\b)\frac{L_\O(\a\b^m)}{[\O_K : \a\b^m \cap \O]}\\ &+ O(x^{1-\varepsilon} + x^{1 + c(1-m)})\text,\end{split}
\end{equation}
whenever $1/m \leq c < 1 - \varepsilon$ and $0 \leq \varepsilon \leq 1/n$, as $x \to \infty$. The implicit $O$-constant depends on $K$, $\a$, $\mathcal{P}$, $f$, $m$, $c$ and $\varepsilon$.

\subsection{End of the proof}\label{end}

By \eqref{splitn}, \eqref{n2est}, \eqref{n1comp1} and \eqref{n1comp2}, we get
\begin{align*}
N(\ul{x}) &= N_1(\ul{x}, x^{c}) + N_2(\ul{x}, x^{c})\\
     &= c_1(K)x\sum_{a\mid\Pi}\mu(\a)\sum_{(\b,\Pi)=1}\mu(\b)\frac{L_\O(\a\b^m)}{[\O_K : \a\b^m \cap \O]} + R\\ 
&=: D x + R\text,
\end{align*}
where 
\[R = O(x^{1-\varepsilon} + x^{1 -c(m-1)} + x^{g/(2l+1)-c(m-l)/(2l+1)} (x^{1-c(m-l)/g} + 1))\]
holds for every $0 \leq \varepsilon \leq 1/n$,  $1/m \leq c < 1-\varepsilon$, and $l \in \{1, \ldots, m-1\}$, as $x \to \infty$. The implicit $O$-constant depends on $K$, $\mathcal{P}$, $f$, $m$, $c$, and $\varepsilon$.

Assume first that $m > g+1$. Then we put 
\[l := m - g,\quad c := 1-5/(g+10),\quad \varepsilon := \min\{1/n,\ 4/(g+10)\}\text,\]
to get
\[R = O(x^{1-1/n} + x^{1-4/(g+10)} + x^{1 - g(g + 5)/(g+10)} + x^{(g+5)/(g+10)}) = O(x^{1-u(n,g)})\text,\]
with $u(n, g)$ as in the theorem.

Now suppose that $2 \leq m \leq g+1$. Then 
\[R = O(x^{1-\varepsilon} + x^{1 -c(m-1)} + x^{1 + g/(2l+1) - c(m - l)(g + 2l + 1)/(g(2l + 1))})\text.\]
We proceed as in \cite[Section 3, Proof of Theorem 1.1]{Hinz1982}. For every $m$ that satisfies \eqref{mvsg}, we find some $1 \leq l \leq m-1 \leq g$, such that $m - l > g^2/(2l+g+1)$. Then we can choose some $c$, depending only on $g$, $l$, with
\[\frac{1}{m} \leq \frac{g(2l+2)}{g(2l+2)(m-l+1)} \leq \frac{g(2l+1)+g^2}{(m-l)(2l+g+1)+g(2l+1)} \leq c < 1\text.\]
A straightforward computation shows that
\[1 + g/(2l+1) - c(m - l)(g + 2l + 1)(g(2l + 1)) \leq c\text.\]
For any $0 < \varepsilon < 1-c$, $\varepsilon \leq 1/n$, we get
\[R = O(x^{1-\varepsilon} + x^{1-c} + x^{c}) = O(x^{1 - u(n, g)})\text,\]
for a suitable choice of $u(n ,g)$. Notice that there are only finitely many values of $m$ for every $g$.

The only task left is to prove that $D$ has the form claimed in the theorem. We split up $D$ in the following way: Let $\Pi_1$ be the product of all prime ideals in $\mathcal{P}\setminus \mathcal{P}_\f$. Then
\begin{align*}
D &= c_1(K)\sum_{\a \mid \f}\mu(\a)\sum_{\b \mid \Pi_1}\mu(\b)\sum_{(\c,\Pi) = 1}\frac{\mu(\c)L_\O(\a\b\c^m)}{[\O_K : \a\b\c^m \cap \O]}\\
&= \frac{c_1(K)}{[\O_K : \O]}\sum_{\a \mid \f}\frac{\mu(\a) L_\O(\a)}{[\O : \a \cap \O]}\sum_{\b \mid \Pi_1}\frac{\mu(\b) L_\O(\b)}{[\O : \b \cap \O]}\sum_{(\c,\Pi) = 1}\frac{\mu(\c) L_\O(\c^m)}{[\O : \c^m \cap \O]}\text.
\end{align*}
This holds because for all combinations of $\a$, $\b$, $\c$ as above, the $\O$-ideals $(\a \cap \O)$, $(\b \cap \O)$ and $(\c^m \cap \O)$ are relatively prime to each other, by Lemma~\ref{orders}. Therefore,
\[[\O_K : \a\b\c^m \cap \O] = [\O_K : \O][\O : \a \cap \O][\O : \b \cap \O][\O : \c^m \cap \O]\text,\]
and
\[L_\O(\a\b\c^m) = L_\O(\a)L_\O(\b)L_\O(\c^m)\text.\]
Finally, we notice that, by Lemma \ref{orders}, $[\O : \r \cap \O] = \AN\r$ and thus  $L_\O(\r) = L(\r)$, for any ideal $\r$ of $\O_K$ relatively prime to $\f$. A simple Euler product expansion yields the desired form of $D$. All factors of the infinite product
\[\prod_{\P \notin \mathcal{P}}\left(1-\frac{L(\P^m)}{\AN\P^m}\right)\]
are positive, since no $\P^m$ is a fixed divisor of $f$. For all but the finitely many prime ideals of $\O_K$ that divide the discriminant of $f$, we have $L(\P^m) = L(\P) \leq g$. Therefore, the infinite product is convergent and positive.

This concludes the proof of Theorem \ref{density} under the assumption that $f$ has no totally positive root in $K$. If $f$ has such a root then we let the first sum in \eqref{n}, \eqref{n1}, \eqref{n2} run over all $\alpha \in \cR(\ul{x})\cap \O$ such that $f(\alpha) \neq 0$. The estimation of $N_2(\ul{x}, y)$ in Section \ref{section_n2est} holds still true, since a possible $\alpha$ with $f(\alpha) = 0$ is ignored in Hinz's estimates anyway. In \eqref{n1comp1}, we get an error term $O(y)$. This additional error term becomes irrelevant in Section \ref{end}. 

\section{Proof of Proposition 4}
We need the following estimate for the index $[\O_K : \O]$.

\begin{lemma}\label{index_of_order}
Let $\p_1$, $\ldots$, $\p_k$ be distinct prime ideals of $\O$. For each $1 \leq i \leq k$, let
\[\p_i \O_K = \P_{i, 1}^{e_{i,1}}\cdots\P_{i, l_i}^{e_{i, l_i}}\]
be the factorisation of $\p_i$ in $\O_K$, with distinct prime ideals $\P_{i,j}$ of $\O_K$, and $e_{i,j}$, $l_i\geq 1$.
Then
\[[\O_K : \O] \geq \prod_{i=1}^k\frac{1}{[\O : \p_i]}\prod_{j=1}^{l_i}\AN\P_{i,j}^{e_{i,j}}\text,\]
with equality if and only if $\f$ divides $\prod_{i = 1}^k\prod_{j=1}^{l_i}\P_{i,j}^{e_{i,j}}$.
\end{lemma}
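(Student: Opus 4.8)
The plan is to express $[\O_K:\O]$ as a product of group indices taken along two towers of finite-index additive subgroups of $\O_K$, and then read off both the inequality and the equality condition at once. Set $\a := \prod_{i=1}^k\p_i\O_K$, a nonzero ideal of $\O_K$, and $\b := \prod_{i=1}^k\p_i$, a nonzero ideal of $\O$. The first point I would establish is that $\p_i\O_K$ is a proper ideal of $\O_K$ for each $i$: since $\O_K$ is integral over $\O$, the lying-over theorem provides a prime $\P$ of $\O_K$ with $\P\cap\O=\p_i$, and then $\p_i\O_K\subseteq\P\subsetneq\O_K$; in particular the factorisation $\p_i\O_K=\P_{i,1}^{e_{i,1}}\cdots\P_{i,l_i}^{e_{i,l_i}}$ makes sense. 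The same observation gives $\p_i\O_K\cap\O=\p_i$: this is an ideal of $\O$ containing $\p_i$ but not $1$, hence equals $\p_i$ because $\p_i$ is maximal (nonzero primes of the one-dimensional Noetherian domain $\O$ are maximal). Two consequences follow immediately: $\b\O_K=\a$ by distributivity, and $\a\cap\O=\b$, since $\b\subseteq\a\cap\O$ trivially while $\a\cap\O\subseteq\p_i\O_K\cap\O=\p_i$ for every $i$, whence $\a\cap\O\subseteq\bigcap_i\p_i=\b$.

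Next I would record two elementary index computations. By multiplicativity of the absolute norm, $\AN\a=\prod_{i=1}^k\AN(\p_i\O_K)=\prod_{i=1}^k\prod_{j=1}^{l_i}\AN\P_{i,j}^{e_{i,j}}$; and since $\p_1,\dots,\p_k$ are distinct maximal ideals, the Chinese remainder theorem gives $[\O:\b]=\prod_{i=1}^k[\O:\p_i]$. Now $\b\subseteq\O\subseteq\O_K$ and $\b\subseteq\a\subseteq\O_K$ are towers of finite-index additive subgroups (each of $\b,\O,\a,\O_K$ is a free $\Z$-module of rank $[K:\Q]$), so multiplicativity of the index yields
\[[\O_K:\O]\,[\O:\b]=[\O_K:\b]=[\O_K:\a]\,[\a:\b]=\AN\a\cdot[\a:\b]\text.\]
Solving for $[\O_K:\O]$ and substituting the two computations above gives
\[[\O_K:\O]=[\a:\b]\cdot\prod_{i=1}^k\frac1{[\O:\p_i]}\prod_{j=1}^{l_i}\AN\P_{i,j}^{e_{i,j}}\text.\]

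Since $[\a:\b]$ is a positive integer, the claimed inequality drops out, with equality if and only if $\a=\b$. To finish I would identify this last condition with $\f\mid\a$: because $\b=\a\cap\O$, the equality $\a=\b$ is equivalent to $\a\subseteq\O$, and since $\a$ is an ideal of $\O_K$ while $\f$ is by definition the largest $\O_K$-ideal contained in $\O$ (with $\f\subseteq\O$), we have $\a\subseteq\O$ if and only if $\a\subseteq\f$, i.e.\ if and only if $\f\mid\a=\prod_{i=1}^k\prod_{j=1}^{l_i}\P_{i,j}^{e_{i,j}}$. I do not expect a serious obstacle in this argument; the steps needing the most care are the identification $\a\cap\O=\b$ — which is precisely what makes the equality case transparent — and the verification that each $\p_i\O_K$ is a proper ideal, everything else being routine bookkeeping with indices and norms.
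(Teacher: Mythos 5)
Your proof is correct and follows essentially the same route as the paper: both compute $[\O_K:\O]$ by comparing the index towers through the ideal $\Pi=\prod_i\p_i\O_K$ and through $\O$ down to $\Pi\cap\O=\prod_i\p_i$, using multiplicativity of the norm, the Chinese remainder theorem, and the identification of the equality case $[\Pi:\Pi\cap\O]=1$ with $\f\mid\Pi$. Your write-up is a bit more explicit than the paper's (e.g.\ in verifying $\p_i\O_K\cap\O=\p_i$ via lying over), but the argument is the same.
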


\begin{proof}
Put
\[\Pi := \prod_{i=1}^k\prod_{j=1}^{l_i}\P_{i, j}^{e_{i,j}}\text.\]
Then we have
\[[\O_K : \O] = \frac{[\O_K : \Pi][\Pi : \Pi \cap \O]}{[\O : \Pi \cap \O]} \geq \frac{\AN\Pi}{[\O : \bigcap_{i=1}^k\p_i]}\\ = \frac{\prod_{i=1}^k\prod_{j=1}^{l_i}\AN\P_{i,j}^{e_{i,j}}}{\prod_{i=1}^k[\O : \p_i]}\text,\]
since $[\O : \Pi \cap \O] = [\O : \bigcap_{i=1}^k \p_i] = \prod_{i=1}^k[\O : \p_i]$, by the Chinese remainder theorem. Moreover, we have $\Pi = \Pi \cap \O$ if and only if $\f$ divides $\Pi$. 
\end{proof}

Without loss of generality, we may assume that $\mathcal{P}$ contains all prime ideals of $\O_K$ dividing the conductor $\f$ of $\O$. Since $\eta \in \O \smallsetminus K^2$, the polynomial $f := X^2 - 4 \eta \in \O[X]$ is irreducible over $\O_K$. Evaluating $f$ at $0$ and $1$, we see that the only fixed divisor of $f$ is $(1)$.

We put $x_1 = \cdots = x_n$, so
\[\cR(\ul{x}) = \{\alpha \in \O_K \mid \alpha \text{ totally positive, } \max_{1\leq i \leq n}\left|\sigma_i(\alpha)\right| \leq x^{1/n}\}\]
depends only on $x$. Let $N(x)$ be the number of all $\alpha \in \cR(\ul{x})$, such that
\begin{enumerate}
 \item for all $\P \in \mathcal{P}$, $\alpha^2 - 4\eta \notin \P$, and
 \item $\alpha^2 - 4\eta$ is squarefree,
\end{enumerate}
and let $N_\O(x)$ be the number of all $\alpha \in \cR(\ul{x}) \cap \O$ with the same two properties. 

Theorem \ref{density}, with $m=g=2$, invoked once with the maximal order $\O_K$ and once with the order $\O$, yields
\[N(x) = D x + O(x^{1-u})\ \text{ and }\ N_\O(x) = D_\O x + O(x^{1-u})\text.\]

To prove the proposition, it is enough to show that
\[\lim_{x\to\infty}\frac{N_\O(x)}{x} < \lim_{x\to\infty}\frac{N(x)}{x}\text,\]
that is, $D_\O < D$.

By Theorem \ref{density}, the infinite product 
\[\prod_{\P \notin \mathcal{P}}\left(1- \frac{L(\P^2)}{\AN\P^2}\right)\]
is convergent and positive. Moreover, we notice that 
\begin{equation}\label{greateronehalf}\left(1 - L(\P)/\AN\P\right) > 1/2\text,\end{equation}
for every prime ideal $\P$ of $\O_K$. This is obvious if $2 \notin \P$, since then $\AN\P\geq 5$ by the hypotheses of the proposition, but $f$ is of degree $2$, so $L(\P) \leq 2$. If $2 \in \P$ then we have $f \equiv X^2 \mod \P$, whence $L(\P) = 1$. On the other hand, $\AN\P \geq 4$, so \eqref{greateronehalf} holds again. Therefore, the finite product 
\[\prod_{\P \in \mathcal{P}\setminus\mathcal{P_\f}}\left(1 - \frac{L(\P)}{\AN\P} \right)\]
is positive as well. The proposition is proved if we can show that
\begin{equation}\label{corollary_goal}\frac{1}{[\O_K : \O]}\sum_{\a \mid \f}\frac{\mu(\a)L_\O(\a)}{[\O : \a \cap \O]} < \prod_{\P \in \mathcal{P}_\f}\left(1 - \frac{L(\P)}{\AN\P}\right)\text.\end{equation}

Let $\p_1$, $\ldots$, $\p_k$ be the prime ideals of $\O$ that contain the conductor $\f$, and, for each $1 \leq i \leq k$, let
\[\p_i \O_K = \P_{i, 1}^{e_{i,1}}\cdots\P_{i, l_i}^{e_{i, l_i}}\text,\]
with distinct prime ideals $\P_{i,j}$ of $\O_K$, and $e_{i,j}$, $l_i\geq 1$. Then the $\P_{i, j}$ are exactly the prime ideals of $\O_K$ dividing $\f$, that is, the elements of $\mathcal{P}_\f$.

Notice that, for every ideal $\a \mid \P_{i, 1} \cdots \P_{i, l_i}$ of $\O_K$, we have $\a \cap \O = \p_i$ if $\a \neq \O_K$, and $\a \cap \O = \O$ if $\a = \O_K$, since $\O$ is one-dimensional. As all $\p_i$, $\p_j$, $i \neq j$, are relatively prime, we get
\begin{align*}
&\sum_{\a \mid \f}\frac{\mu(\a)L_\O(\a)}{[\O : \a \cap \O]} = \prod_{i=1}^k \sum_{\a \mid \P_{i, 1}\cdots\P_{i, l_i}}\frac{\mu(\a)L_\O(\a)}{[\O : \a\cap \O]}\\
=\ &\prod_{i=1}^k\left(1 + \frac{L_\O(\P_{i,1})}{[\O : \p_i]}\sum_{\substack{J \subseteq \{1, \ldots, l_i\}\\J \neq \emptyset}}(-1)^{\left|J\right|}\right) = \prod_{i=1}^k\left(1 - \frac{L_\O(\P_{i,1})}{[\O : \p_i]}\right)\text.
\end{align*}

Thus, \eqref{corollary_goal} is equivalent to
\[\prod_{i=1}^k\left(1 - \frac{L_\O(\P_{i,1})}{[\O : \p_i]}\right) < [\O_K : \O]\prod_{i=1}^k\prod_{j=1}^{l_i}\left(1 - \frac{L(\P_{i,j})}{\AN\P_{i,j}}\right)\text.\]
Clearly, $\Pi := \prod_{i=1}^k\prod_{j=1}^{l_i}\P_{i,j}^{e_{i,j}}$ divides the conductor $\f$. Let us first assume that $\Pi$ is a proper divisor of $\f$. Then Lemma \ref{index_of_order} (with strict inequality, since $\f$ does not divide $\Pi$), \eqref{greateronehalf}, and the fact that $\AN\P \geq 4$ for all prime ideals $\P$ of $\O_K$ imply
\begin{align*}
[\O_K : \O]&\prod_{i=1}^k\prod_{j=1}^{l_i}\left(1 - \frac{L(\P_{i,j})}{\AN\P_{i,j}}\right) > \prod_{i=1}^k\frac{\AN\P_{i,1}^{e_{i,1}}}{[\O : \p_i]}\left(1 - \frac{L(\P_{i,1})}{\AN\P_{i,1}}\right)\prod_{j=2}^{l_i}\frac{\AN\P_{i,j}^{e_{i,j}}}{2}\\\geq\ &\prod_{i=1}^k\frac{\AN\P_{i,1}}{[\O : \p_i]}\left(1 - \frac{L(\P_{i,1})}{\AN\P_{i,1}}\right)2^{l_i-1} \geq \prod_{i=1}^k\left(1 - \frac{L_\O(\P_{i,1})}{[\O : \p_i]}\right)\text.
\end{align*}
For the last inequality, notice that either $\O_K/\P_{i,1} \simeq \O/\p_i$, and thus $L(\P_{i,1}) = L_\O(\P_{i,1})$, or \[\frac{\AN\P_{i,1}}{[\O : \p_i]}\left(1 - \frac{L(\P_{i,1})}{\AN\P_{i,1}}\right) > 2\cdot \frac{1}{2} = 1 \geq 1 - \frac{L_\O(\P_{i,1})}{[\O : \p_i]}\text.\]

We are left with the case where $\Pi = \f$. Then, for all $1 \leq i \leq k$, we have
\begin{equation}\label{primedividesf}l_i > 1 \text{ or } e_{i,1} > 1 \text{ or } [\O_K/\P_{i,1} : \O/\p_i] > 1\text.\end{equation}
Indeed, suppose otherwise, that is $\p_i \O_K = \P_{i,1}$ and $\O_K/\P_{i,1} \simeq \O/\p_i$, for some $i$. We put $\tilde{\O} := (\O_K)_{\P_{i,1}}$, the integral closure of the localisation $\O_{\p_i}$, $\mathfrak{m} := \p_i \O_{\p_i}$, the maximal ideal of $\O_{\p_i}$, and $\mathfrak{M} := \P_{i,1}\tilde{\O}$, the maximal ideal of $\tilde{\O}$. Then
\[[\tilde{\O} : \O_{\p_i}] = \frac{[\tilde{\O} : \mathfrak{M}][\mathfrak{M} : \mathfrak{m}]}{[\O_{\p_i} : \mathfrak{m}]} = \frac{[\O_K : \P_{i,1}][\mathfrak{M} : \mathfrak{m}]}{[\O : \p_i]} = 1\text.\]
The second equality holds because $\O_K/\P_{i,1} \simeq \tilde{\O}/\mathfrak{M}$, and $\O/\p_i \simeq \O_{\p_i}/\mathfrak{m}$. The third equality holds because $\mathfrak{M} = \P_{i,1}\tilde{\O} = \f\tilde{\O}$, whence $\mathfrak{M}$ is clearly contained in the conductor of $\O_{p_i}$ in $\tilde{\O}$. (Here we used the hypothesis $\Pi = \f$.) Therefore $\mathfrak{M} = \mathfrak{M} \cap \O_{\p_i} = \mathfrak{m}$.

Therefore, $\O_{p_i}$ is a discrete valuation ring. According to \cite[Theorem I.12.10]{Neukirch1999}, this is the case if and only if $\p_i$ does not contain $\f$. Since $\p_i$ contains $\f$, we have proved \eqref{primedividesf}. (In \cite[Section I.13]{Neukirch1999}, it is stated that \eqref{primedividesf} holds even without the requirement that $\Pi = \f$, but no proof is given.)

With Lemma \ref{index_of_order}, \eqref{greateronehalf}, and the fact that $\AN\P \geq 4$ for all prime ideals $\P$ of $\O_K$, we get
\begin{align*}
[\O_K : \O]&\prod_{i=1}^k\prod_{j=1}^{l_i}\left(1 - \frac{L(\P_{i,j})}{\AN\P_{i,j}}\right) > \prod_{i=1}^k\frac{1}{[\O : \p_i]}\prod_{j=1}^{l_i}\frac{\AN\P_{i,j}^{e_{i,j}}}{2}\\\geq\ &\prod_{i=1}^k\frac{\AN\P_{i,1}}{[\O : \p_i]}\frac{\AN\P_{i,1}^{e_{i,1}-1}}{2}2^{l_i-1} \geq \prod_{i=1}^k2^{([\O_K/\P_{i,1} : \O/\p_i] - 1) + (e_{i,1}-1) + (l_i - 1) - 1}\text.
\end{align*}
To conclude our proof, we notice that the last expression is at least $1$, by \eqref{primedividesf}. 
\section{Proof of Theorem 1}
We need to construct extensions of $K$ where we have good control over the ring of integers. This is achieved by the following two lemmata.

\begin{lemma}[({\cite[Lemma 1]{Llorente1984}})]\label{discriminant_exponent}
Let $r$ be a positive integer, and $\beta \in \O_K$, such that $g = X^r - \beta \in \O_K[X]$ is irreducible. Let $\eta$ be a root of $g$, $L = K(\eta)$, and $\D_{L|K}$ the relative discriminant of $L|K$. For every prime ideal $\P$ of $\O_K$ not dividing $\gcd(r, v_\P(\beta))$, we have
\[v_\P(\D_{L|K}) = r\cdot v_\P(r) + r - \gcd(r, v_\P(\beta))\text.\]
\end{lemma}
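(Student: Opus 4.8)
The plan is to prove Lemma~\ref{discriminant_exponent} by a purely local computation at the prime $\P$, following the classical Kummer-style analysis of radical extensions. Since the relative discriminant is the norm of the relative different, and since $v_\P(\D_{L|K})$ is determined by the completions, I would pass to the completion $\O_{K,\P}$ (a complete discrete valuation ring with uniformiser $\pi$, normalised valuation $v_\P$) and compute the different of the corresponding extension of local fields. Write $v_\P(\beta) = a$ and $d := \gcd(r, a)$; the hypothesis is precisely $\P \nmid d$, i.e.\ $d$ is a unit in the sense that $r/d$ is prime to the residue characteristic only insofar as... actually the key point is that $d \mid r$, and we are told $v_\P(d) = 0$, which here just means $d$ is an ordinary positive integer dividing $r$ with no constraint — the real content is that one can extract a $d$-th root cleanly.

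\textbf{Key steps, in order.} First, reduce $\beta$ to a normal form: since $d = \gcd(r,a)$, write $a = d a'$ with $\gcd(a', r/d) = 1$; one can find integers so that, after multiplying $\beta$ by an appropriate $r$-th power from $K^\times$ (which changes neither $L$ nor $v_\P(\D_{L|K})$), we may assume $v_\P(\beta) = d$. Concretely, replacing $\eta$ by $\eta \pi^{-k}$ for suitable $k$ rescales $\beta$ by $\pi^{-kr}$, and since $\gcd(r,a)=d$ we can arrange $v_\P(\beta) \equiv d \pmod r$, hence (absorbing the multiple of $r$) $v_\P(\beta) = d$. Second, with $v_\P(\beta) = d$ and writing $r = d s$ with $\gcd(a'\!,s)$ irrelevant — better: now $\eta$ satisfies $\eta^r = \beta$ with $v(\beta) = d$, so $v_L(\eta) \cdot e(L|K) \cdot$ ... set $w = v_L$ the extension valuation; then $r\, w(\eta) = e \cdot d$ where $e = e(L|K)$, and since $\gcd(r/d, \text{stuff}) $ forces $e$ to be a multiple of $r/d$; combined with $[L:K] = r$ one gets $e = r/d$ and residue degree $f$ with $f \cdot (r/d) \le r$, and an element of valuation $1$ in $L$ is $\eta^{u}\pi^{v}$ for suitable $u,v$ (Bézout on $d = \gcd(r,a)$ type relation). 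Third, compute the different: the subextension generated by a $d$-th root is unramified-ish and contributes nothing beyond what $v_\P(r)$ forces, while the totally ramified part $X^{r/d} - \beta'$ (with $\beta'$ a uniformiser) has different exponent computed from $\frac{d}{dX}(X^{r/d} - \beta') = (r/d) X^{r/d - 1}$ evaluated at a uniformiser, giving $v_L\big((r/d)\eta'^{r/d-1}\big) = (r/d)\,v_L(r/d) + (r/d) - 1$... and then unwinding the relation between $v_L(r/d)$, $v_\P(r)$, and collecting the contribution of the $d$-th root part, one arrives after taking norms ($\AN = $ raising to $f$ and the valuation scaling by $e$) at $v_\P(\D_{L|K}) = r\, v_\P(r) + r - d$. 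I would organise this as: (i) normal form for $\beta$; (ii) determination of $e, f$ for $L|K$ at $\P$; (iii) exhibit a uniformiser of $L_\P$ and an integral basis / use the conductor-discriminant or different formula; (iv) arithmetic to match the stated exponent.

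\textbf{Main obstacle.} The delicate point is handling the ``tame versus wild'' split according to whether $\P \mid r$. When $\P \nmid r$ the extension $X^{r/d} - (\text{uniformiser})$ is tamely and totally ramified of degree $r/d$, the different exponent of the ramified part is simply $(r/d) - 1$, $v_\P(r) = 0$, and the formula reads $v_\P(\D_{L|K}) = r - d$ — this case is essentially Abhyankar/tame ramification and is easy. The genuine work is the wild case $\P \mid r$, where one must carefully track $v_L(r) = e \cdot v_\P(r) = (r/d) v_\P(r)$ and show that the differential $\tfrac{d}{dX}(X^r - \beta) = r X^{r-1}$, evaluated at $\eta$ and then interpreted after the change of variable to a uniformiser, produces exactly the exponent $r\, v_\P(r) + r - d$ with no error term — this requires knowing that $X^r - \beta$ (after normalisation) is the minimal polynomial and that $\O_{L,\P} = \O_{K,\P}[\eta']$ for the right generator, which is where the hypothesis $\gcd(r, v_\P(\beta)) $ being a ``clean'' divisor is used. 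I expect to invoke the standard fact (e.g.\ Serre, \emph{Local Fields}, or a direct valuation computation) that for a uniformiser $\varpi$ of a complete DVR and $X^m - \varpi$ irreducible, the extension is totally ramified with different exponent $v(m) \cdot m + m - 1$ in the $m$-adic normalisation, and then assemble the two layers. Since this is a cited result (\cite[Lemma 1]{Llorente1984}), in the paper I would simply reference it; the sketch above is how one reconstructs the proof if needed.
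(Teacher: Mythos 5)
The paper does not prove this lemma at all: it is imported verbatim from Llorente--Nart--Vila \cite[Lemma 1]{Llorente1984}, and the citation is the entire ``proof''. Your bottom line --- reference the result --- is therefore exactly what the paper does, and your reconstruction sketch (localise at $\P$, compute the different of the radical extension, separate the tame degree-$d$ layer from the ramified degree-$r/d$ layer, and convert the different exponent into the discriminant exponent) is indeed the standard route by which such formulas are established.

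Two points in the sketch would fail as written, though, and you should be aware of them if you ever carry the computation out. First, the normalisation ``we may assume $v_\P(\beta)=d$'' is not achievable in general: twisting $\beta$ by an $r$-th power $\gamma^r$ only changes $v_\P(\beta)$ by a multiple of $r$, so you control $v_\P(\beta)$ modulo $r$, and $v_\P(\beta)\bmod r$ need not equal $d=\gcd(r,v_\P(\beta))$ (take $r=6$ and $v_\P(\beta)=4$, so $d=2$ but the residue is $4$). The correct move is the one you gesture at with B\'ezout: with $ua+vr=d$ the element $\eta^u\pi^v$ has the right valuation, but one must then check it generates the correct subextension, which is where the hypothesis $\P\nmid d$ enters. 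Second, after completing at $\P$ the polynomial $X^r-\beta$ may become reducible, so $L\otimes_K K_\P$ is a product of local fields $L_\IQ$ and $v_\P(\D_{L|K})$ is the sum $\sum_{\IQ\mid\P} f(\IQ|\P)\,d(L_\IQ|K_\P)$; your computation treats a single local extension of degree $r$, which silently assumes $\P$ has a unique prime above it with full local degree. Both gaps are repairable (and are handled in \cite{Llorente1984}), but since the paper itself offers no argument, the honest comparison is that your proposal matches the paper precisely insofar as it defers to the citation.
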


\begin{lemma}\label{discriminant}
Let $\omega$, $\eta \in \O_K$, such that $\omega^2 - 4 \eta$ is squarefree and relatively prime to $2$. Assume that the polynomial $h := X^2 - \omega X + \eta \in \O_K[X]$ is irreducible, and let $\alpha$ be a root of $h$. Then the ring of integers of $K(\alpha)$ is $\O_K[\alpha]$, and the relative discriminant $\D_{K(\alpha)|K}$ of $K(\alpha)$ over $K$ is the principal ideal $(\omega^2 - 4 \eta)$.
\end{lemma}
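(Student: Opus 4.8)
The plan is to compute the relative discriminant locally, prime by prime, and deduce that $\O_K[\alpha]$ is maximal at every prime. First I would record the global discriminant of the order $\O_K[\alpha]$: since $\alpha$ is a root of $h = X^2 - \omega X + \eta$, the relative discriminant of the $\O_K$-module $\O_K[\alpha] = \O_K + \O_K\alpha$ is generated by $\mathrm{disc}(h) = \omega^2 - 4\eta$. We always have the factorisation $\mathfrak{d}_{\O_K[\alpha]|\O_K} = \mathfrak{f}_{\alpha}^2 \cdot \D_{K(\alpha)|K}$, where $\mathfrak{f}_\alpha$ is the conductor of $\O_K[\alpha]$ in $\O_{K(\alpha)}$; hence $\D_{K(\alpha)|K}$ divides $(\omega^2-4\eta)$, and the quotient is a square. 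So it suffices to show that no prime $\P$ of $\O_K$ divides $\mathfrak{f}_\alpha$, i.e. that $\O_K[\alpha]$ is already the integral closure locally at each $\P$; then $\mathfrak{f}_\alpha = (1)$ and $\D_{K(\alpha)|K} = (\omega^2-4\eta)$ exactly, which also forces $\O_{K(\alpha)} = \O_K[\alpha]$.

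For the local analysis I would distinguish the primes $\P$ according to whether $\P \mid (\omega^2 - 4\eta)$. If $\P \nmid (\omega^2-4\eta)$, then $\mathrm{disc}(h)$ is a $\P$-unit, so $h$ is separable mod $\P$ and the standard criterion (e.g. Dedekind's criterion, or directly: the different of $\O_K[\alpha]/\O_K$ is generated by $h'(\alpha) = 2\alpha - \omega$, whose norm is $-(\omega^2-4\eta)$, a $\P$-unit) shows $\O_K[\alpha]$ is maximal at $\P$ and $\P$ is unramified in $K(\alpha)$. Now suppose $\P \mid (\omega^2-4\eta)$. Since $\omega^2-4\eta$ is squarefree, $v_\P(\omega^2-4\eta) = 1$; and since $\omega^2-4\eta$ is coprime to $2$, we have $2 \notin \P$, so $2$ is a $\P$-unit. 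The idea is to move to a pure-quadratic model: complete the square, writing $\delta := 2\alpha - \omega$, which satisfies $\delta^2 = \omega^2 - 4\eta$. Because $2$ is a $\P$-unit, $\O_K[\alpha] = \O_K[\delta]$ locally at $\P$ (indeed $\alpha = (\delta + \omega)/2$), so locally $K(\alpha) = K(\delta)$ with $\delta^2 = \beta := \omega^2 - 4\eta$ and $v_\P(\beta) = 1$. Here $\gcd(2, v_\P(\beta)) = \gcd(2,1) = 1$, so Lemma \ref{discriminant_exponent} applies with $r = 2$: it gives $v_\P(\D_{K(\delta)|K}) = 2\,v_\P(2) + 2 - \gcd(2,1) = 0 + 2 - 1 = 1$, using $v_\P(2) = 0$. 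Thus $v_\P(\D_{K(\alpha)|K}) = 1 = v_\P(\omega^2-4\eta)$ at every such $\P$, and combined with the unramified primes this shows $\D_{K(\alpha)|K}$ and $(\omega^2-4\eta)$ have the same valuation everywhere, hence are equal; and then $\mathfrak{f}_\alpha^2 = (\omega^2-4\eta)\D_{K(\alpha)|K}^{-1} = (1)$, so $\O_{K(\alpha)} = \O_K[\alpha]$.

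I would also verify the hypothesis of Lemma \ref{discriminant_exponent} is legitimately available, namely that $g = X^2 - \beta$ is irreducible over $\O_K$ (equivalently $\beta = \omega^2 - 4\eta \notin K^2$): this follows because $h$ is assumed irreducible, so $[K(\alpha):K] = 2$, and $K(\alpha) = K(\delta)$ with $\delta^2 = \beta$, forcing $\beta$ to be a non-square. The main obstacle is bookkeeping the passage between the "trinomial" model $h = X^2 - \omega X + \eta$ and the "pure" model $g = X^2 - \beta$: one must check that completing the square does not change the ring generated by the root at primes away from $2$, and that the relative discriminant is insensitive to this change of model there — both are elementary once one notes $2 \notin \P$ for every relevant $\P$, which is exactly guaranteed by the hypothesis that $\omega^2 - 4\eta$ is coprime to $2$. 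The squarefreeness hypothesis is used precisely to guarantee $v_\P(\beta) \le 1$, so that $\gcd(2, v_\P(\beta)) = 1$ and Lemma \ref{discriminant_exponent} yields discriminant exponent exactly $1$ (tame ramification of degree $2$), never $0$ or $2$.
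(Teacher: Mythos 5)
Your proposal is correct and follows essentially the same route as the paper: compute $d(\alpha)=\omega^2-4\eta$, note that $\D_{K(\alpha)|K}$ divides it, pass to the Kummer model $K(\alpha)=K(\sqrt{\omega^2-4\eta})$, and apply Lemma \ref{discriminant_exponent} at each prime dividing $\omega^2-4\eta$ (where $v_\P(\beta)=1$ by squarefreeness and $v_\P(2)=0$ by coprimality to $2$) to get discriminant exponent exactly $1$, whence $\D_{K(\alpha)|K}=(d(\alpha))$ and $\O_{K(\alpha)}=\O_K[\alpha]$. Your write-up is merely more explicit about the conductor--discriminant bookkeeping, which the paper handles by citing \cite[Chapter V, Theorem 30]{Zariski1975}.
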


\begin{proof}
The discriminant of $\alpha$ over $K$ is 
\[d(\alpha) = \det\begin{pmatrix}1 &  (\omega + \sqrt{\omega^2 - 4\eta})/2 \\ 1 &  (\omega - \sqrt{\omega^2 - 4\eta})/2 \end{pmatrix}^2 = \omega^2 - 4\eta\text.\]
Let, say, $(\omega^2 - 4\eta) = \P_1 \cdots \P_s$, with an integer $s \geq 0$ and distinct prime ideals $\P_i$ of $\O_K$ not containing $2$. Then the relative discriminant $\D_{K(\alpha)|K}$ divides $\P_1 \cdots \P_s$.

Since $K(\alpha) = K(\sqrt{\omega^2 - 4\eta})$, Lemma \ref{discriminant_exponent} implies that $v_{\P_i}(\D_{K(\alpha)|K}) = 1$, for all $1 \leq i \leq s$, whence the relative discriminant $\D_{K(\alpha)|K}$ is the principal ideal $(\omega^2 - 4\eta) = (d(\alpha))$. This is enough to prove that the ring of integers of $K(\alpha)$ is $\O_K[\alpha]$ (see, for example, \cite[Chapter V, Theorem 30]{Zariski1975}).
\end{proof}

We may assume that $K$ satisfies the hypotheses of Proposition \ref{sqfree}, since it is enough to prove the theorem for the number field $K(\sqrt{5}) \supseteq \Q(\sqrt{5})$.

We may also assume that the field $K$ is generated by a unit of $\O_K$. If not, say $K = \Q(\beta)$, where $\beta \in \O_K$. Let $\alpha$ be a root of the polynomial $X^2 - \beta X + 1 \in \O_K[X]$. Then $\Q(\alpha) \supseteq K$, whence it is enough to prove the theorem for $\Q(\alpha)$, and $\alpha$ is a unit of the ring of integers of $\Q(\alpha)$.

Therefore, the ring generated by the units of $\O_K$ is an order. Let us call that order $\O^U$. If $\O^U = \O_K$ then there is nothing to prove, so assume from now on that $\O^U \neq \O_K$.

Choose a unit $\eta \in \O_K^*\smallsetminus K^2$. We use Proposition \ref{sqfree} to obtain elements $\omega_1$, $\ldots$, $\omega_r \in \O_K$ with 
\begin{equation}\label{omegai2}\O_K = \O^U[\omega_1, \ldots, \omega_r]\text,\end{equation}
such that 
\begin{equation}\label{omegai}\text{all $\omega_i^2 - 4\eta$ are squarefree and relatively prime to $2$ and each other.}\end{equation}
Start with 
\[\mathcal{P} := \supp(2)\text,\quad \O := \O^U\text,\]
and choose an element $\omega_1$ as in Proposition \ref{sqfree}. Then $\O^U[\omega_1]$ is an order larger than $\O^U$, whence
\[[\O_K : \O^U[\omega_1]] = \frac{[\O_K : \O^U]}{[\O^U[\omega_1] : \O^U]}\leq \frac{[\O_K : \O^U]}{2}\text.\]
Assume now that $\omega_1$, $\ldots$, $\omega_{i-1}$ have been chosen. If $\O^U[\omega_1, \ldots, \omega_{i-1}] = \O_K$ then stop, otherwise put 
\[\mathcal{P} := \supp(2) \cup \bigcup_{j=1}^{i-1} \supp(\omega_j^2-4\eta)\text,\quad \O := \O^U[\omega_1, \ldots, \omega_{i-1}]\text.\]
Let $\omega_i$ be an element as in Proposition \ref{sqfree}. Then 
\[[\O_K : \O^U[\omega_1, \ldots, \omega_i]] \leq [\O_K : \O^U[\omega_1, \ldots, \omega_{i-1}]]/2 \leq [\O_K : \O^U]/2^i\text.\]
Therefore, the above process stops after $r \leq \log_2([\O_K : \O^U])$ steps, with elements $\omega_1$, $\ldots$, $\omega_r \in \O_K \smallsetminus \O^U$, such that $\O_K = \O^U[\omega_1, \ldots, \omega_r]$. Conditions \eqref{omegai} hold by our construction.

For $1 \leq i \leq r$, let $\alpha_i$ be a root of the polynomial $X^2 - \omega_i X + \eta \in \O_K[X]$. Then $\alpha_i$ is a unit in the ring of integers of $K(\alpha_i)$. Moreover, $\alpha_i \notin K$, since otherwise $\alpha_i \in \O_K^*$, and $\omega_i = \alpha_i + \eta\alpha_i^{-1} \in \O^U$, a contradiction. By Lemma \ref{discriminant}, the ring of integers of $K(\alpha_i)$ is $\O_K[\alpha_i]$, and the relative discriminant $\D_{K(\alpha_i)|K}$ of $K(\alpha_i)$ over $K$ is the principal ideal $(\omega_i^2 - 4\eta)$.

We use the following well-known fact (for a proof, see \cite[Theorem I.2.11]{Neukirch1999}): 

\begin{lemma}\label{galois_comp}
Let $L|K$ and $L'|K$ be two Galois extensions of $K$ such that
\begin{enumerate}
\item $L \cap L' = K$,
\item $L$ has a relative integral basis $\{\beta_1, \ldots, \beta_l\}$ over $K$,
\item $L'$ has a relative integral basis $\{\beta_1', \ldots, \beta_{l'}'\}$ over $K$, and
\item the relative discriminants $\D_{L|K}$ and $\D_{L'|K}$ are relatively prime\text.
\end{enumerate}
Then the compositum $LL'$ has a relative integral basis over $K$ consisting of all products $\beta_i \beta_j'$, and the relative discriminant of $LL'|K$ is
\begin{equation*}\D_{LL'|K} = \D_{L|K}^{[L':K]}\D_{L'|K}^{[L:K]}\text.\end{equation*}
\end{lemma}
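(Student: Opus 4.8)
This statement is classical (it is essentially \cite[Theorem I.2.11]{Neukirch1999}), but here is the argument I would give. Write $l = [L:K]$ and $l' = [L':K]$. Since $L|K$ is Galois and $L \cap L' = K$, the fields $L$ and $L'$ are linearly disjoint over $K$, so $[LL':K] = ll'$ and the canonical $K$-algebra map $L \otimes_K L' \to LL'$ is an isomorphism; in particular the $ll'$ products $\beta_i\beta_j'$ form a $K$-basis of $LL'$. Each $\beta_i\beta_j'$ is an algebraic integer, so the $\O_K$-module $M := \sum_{i,j}\O_K\beta_i\beta_j'$ satisfies $M \subseteq \O_{LL'}$. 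The plan is to prove $M = \O_{LL'}$; the discriminant formula then drops out of a determinant computation.

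For the discriminant of $M$: under the identification $LL' \cong L \otimes_K L'$, the trace form $\mathrm{Tr}_{LL'|K}$ is the tensor product of $\mathrm{Tr}_{L|K}$ and $\mathrm{Tr}_{L'|K}$, since $\mathrm{Tr}_{LL'|K}(a\otimes a') = \mathrm{Tr}_{L|K}(a)\,\mathrm{Tr}_{L'|K}(a')$. Hence the Gram matrix of $\{\beta_i\beta_j'\}$ is the Kronecker product of the Gram matrices of $\{\beta_i\}$ and $\{\beta_j'\}$, and passing to determinants gives $d(\{\beta_i\beta_j'\}) = d(\{\beta_i\})^{l'}\,d(\{\beta_j'\})^{l}$. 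As $\{\beta_i\}$ and $\{\beta_j'\}$ are relative integral bases, $d(\{\beta_i\})\O_K = \D_{L|K}$ and $d(\{\beta_j'\})\O_K = \D_{L'|K}$; so once $M = \O_{LL'}$ is known we get $\D_{LL'|K} = d(\{\beta_i\beta_j'\})\O_K = \D_{L|K}^{[L':K]}\D_{L'|K}^{[L:K]}$, as claimed.

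It remains to show $M = \O_{LL'}$, and this is the one genuinely non-formal step, where coprimality of the relative discriminants is used. I would argue locally: for each prime $\p$ of $\O_K$, localizing makes $\O_L$, $\O_{L'}$, $\O_{LL'}$ free over the discrete valuation ring $(\O_K)_\p$, and $M_\p$ is the image of $(\O_L)_\p \otimes_{(\O_K)_\p}(\O_{L'})_\p$ in $LL'$. Since $\D_{L|K}$ and $\D_{L'|K}$ are coprime, at least one of them, say $\D_{L'|K}$, is a unit at $\p$, so $(\O_{L'})_\p$ is étale over $(\O_K)_\p$. Étaleness is stable under base change, so $(\O_L)_\p \otimes_{(\O_K)_\p}(\O_{L'})_\p$ is étale, hence normal, over $(\O_L)_\p$; being also finite over $(\O_K)_\p$ with total ring of fractions $L \otimes_K L' = LL'$, it must equal the integral closure $(\O_{LL'})_\p$. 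Thus $M_\p = (\O_{LL'})_\p$ for every $\p$, whence $M = \O_{LL'}$. (Anyone preferring to avoid the word ``étale'' can instead bound $v_\p([\O_{LL'}:M])$ via the tower formula $\D_{LL'|K} = \D_{L|K}^{[LL':L]}\mathrm{N}_{L|K}(\D_{LL'|L})$ together with $\D_{LL'|L}\mid \D_{L'|K}\O_L$ and the same coprimality; the content is the same, namely that ``unramified above $\p$'' is inherited by the compositum.)
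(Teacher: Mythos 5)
Your proof is correct. Note that the paper does not prove this lemma at all --- it is quoted as a known result with a pointer to \cite[Theorem I.2.11]{Neukirch1999} --- so there is no ``paper proof'' to match; the relevant comparison is with Neukirch's argument. Your first two steps (linear disjointness from $L\cap L'=K$ plus Galoisness, and the Kronecker-product computation showing $d(\{\beta_i\beta_j'\})=d(\{\beta_i\})^{l'}d(\{\beta_j'\})^{l}$ via $\mathrm{Tr}_{LL'|K}(aa')=\mathrm{Tr}_{L|K}(a)\mathrm{Tr}_{L'|K}(a')$) are the same as in the classical treatment. Where you diverge is the key step $M=\O_{LL'}$: Neukirch writes an arbitrary $\alpha\in\O_{LL'}$ as $\sum x_{ij}\beta_i\beta_j'$ and uses the Galois action together with Cramer's rule to show that both $d(\{\beta_i\})x_{ij}$ and $d(\{\beta_j'\})x_{ij}$ are integral, so coprimality of the discriminants forces $x_{ij}\in\O_K$; you instead localize at each prime $\p$ of $\O_K$ and observe that one of the two factors is \'etale there, so the tensor product $(\O_L)_\p\otimes_{(\O_K)_\p}(\O_{L'})_\p$ is normal with fraction field $LL'$ and hence is the full integral closure. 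Both routes are sound; the Cramer's-rule argument is more elementary and self-contained (it works verbatim in the relative setting used here, where $\D_{L|K}$ is an ideal rather than a number, as long as one phrases ``coprime'' ideal-theoretically, which your local argument does automatically), while your \'etale/base-change argument is shorter once the standard facts (\'etale implies normal over a normal base, \'etale is stable under base change) are granted, and it isolates exactly where the coprimality hypothesis enters. Your parenthetical fallback via the tower formula $\D_{LL'|K}=\D_{L|K}^{[LL':L]}\mathrm{N}_{L|K}(\D_{LL'|L})$ is also a legitimate way to finish. No gaps.
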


Consider the extension fields $L_i := K(\alpha_1, \ldots, \alpha_i)$ of $K$. We claim that $L_i$ has an integral basis over $K$ consisting of (not necessarily all) products of the form  
\[\prod_{j \in J}\alpha_j, \quad \text{ for }J \subseteq \{1, \ldots, i\}\text,\]
and that the relative discriminant $\D_{L_i | K}$ is relatively prime to all relative discriminants $\D_{K(\alpha_j)|K}$, for $i < j \leq r$.

With \eqref{omegai}, this claim clearly holds for $L_1 = K(\alpha_1)$. If the claim holds for $L_{i-1}$, and $\alpha_i \in L_{i-1}$, then it holds for $L_i = L_{i-1}$ as well. If $K(\alpha_i) \not\subseteq L_{i-1}$ then the extensions $L_{i-1}|K$ and $K(\alpha_i)|K$ satisfy all requirements of Lemma \ref{galois_comp}, whence the claim holds as well for $L_i = L_{i-1}K(\alpha_i)$.

Now put $L := L_r$. Then the ring of integers of $L$ is $\O_L = \O_K[\alpha_1, \ldots, \alpha_r]$. With \eqref{omegai2} and $\omega_i = \alpha_i + \eta\alpha_i^{-1}$, we get
\[\O_L = \O^U[\omega_1, \ldots, \omega_r, \alpha_1, \ldots, \alpha_r] = \O^U[\alpha_1, \alpha_1^{-1}, \ldots, \alpha_r, \alpha_r^{-1}]\text,\]
and the latter ring is generated by units of $\O_L$. 

\subsection*{Acknowledgements}
I would like to thank Martin Widmer for many helpful comments and discussions, in particular about Lemma \ref{counting_integers} and the linear transformation $T$ that occurs there. The idea of using such transformations stems from an upcoming paper by Widmer.

\bibliographystyle{plain}
\bibliography{extension_problem}

\begin{thebibliography}{10}

\bibitem{Ashrafi2005}
N.~Ashrafi and P.~V{\'a}mos.
\newblock On the unit sum number of some rings.
\newblock {\em Q. J. Math.}, 56(1):1--12, 2005.

\bibitem{Belcher1974}
P.~Belcher.
\newblock Integers expressible as sums of distinct units.
\newblock {\em Bull. Lond. Math. Soc.}, 6:66--68, 1974.

\bibitem{Belcher1975}
P.~Belcher.
\newblock A test for integers being sums of distinct units applied to cubic
  fields.
\newblock {\em J. Lond. Math. Soc. (2)}, 12(2):141--148, 1975/76.

\bibitem{Cox1989}
D.~A. Cox.
\newblock {\em Primes of the form {$x^2 + ny^2$}}.
\newblock John Wiley \& Sons Inc., New York, 1989.
\newblock Fermat, class field theory and complex multiplication.

\bibitem{Filipin2008}
A.~Filipin, R.~F. Tichy, and V.~Ziegler.
\newblock The additive unit structure of pure quartic complex fields.
\newblock {\em Funct. Approx. Comment. Math.}, 39(1):113--131, 2008.

\bibitem{Filipin2008B}
A.~Filipin, R.~F. Tichy, and V.~Ziegler.
\newblock On the quantitative unit sum number problem---an application of the
  subspace theorem.
\newblock {\em Acta Arith.}, 133(4):297--308, 2008.

\bibitem{Frei2010}
C.~Frei.
\newblock Sums of units in function fields.
\newblock {\em Monatsh. Math.}, 164(1):39--54, 2011.

\bibitem{Frei2010B}
C.~Frei.
\newblock Sums of units in function fields {II} - {T}he extension problem.
\newblock {\em Acta Arith.}, 149(4):361--369, 2011.

\bibitem{Fuchs2009}
C.~Fuchs, R.~F. Tichy, and V.~Ziegler.
\newblock On quantitative aspects of the unit sum number problem.
\newblock {\em Arch. Math.}, 93:259--268, 2009.

\bibitem{Hajdu2007}
L.~Hajdu.
\newblock Arithmetic progressions in linear combinations of {$S$}-units.
\newblock {\em Period. Math. Hung.}, 54(2):175--181, 2007.

\bibitem{Hinz1982}
J.~G. Hinz.
\newblock Potenzfreie {W}erte von {P}olynomen in algebraischen {Z}ahlk\"orpern.
\newblock {\em J. Reine Angew. Math.}, 332:134--150, 1982.

\bibitem{Jacobson1964}
B.~Jacobson.
\newblock Sums of distinct divisors and sums of distinct units.
\newblock {\em Proc. Am. Math. Soc.}, 15:179--183, 1964.

\bibitem{Jarden2007}
M.~Jarden and W.~Narkiewicz.
\newblock On sums of units.
\newblock {\em Monatsh. Math.}, 150(4):327--332, 2007.

\bibitem{Llorente1984}
P.~Llorente, E.~Nart, and N.~Vila.
\newblock Discriminants of number fields defined by trinomials.
\newblock {\em Acta Arith.}, 43(4):367--373, 1984.

\bibitem{Masser2006}
D.~Masser and J.~D. Vaaler.
\newblock Counting algebraic numbers with large height. {II}.
\newblock {\em Trans. Amer. Math. Soc.}, 359(1):427--445, 2006.

\bibitem{Neukirch1999}
J.~Neukirch.
\newblock {\em Algebraic number theory}, volume 322 of {\em Grundlehren der
  Mathematischen Wissenschaften [Fundamental Principles of Mathematical
  Sciences]}.
\newblock Springer-Verlag, Berlin, 1999.

\bibitem{Sliwa1974}
J.~{\'S}liwa.
\newblock Sums of distinct units.
\newblock {\em Bull. Acad. Pol. Sci.}, 22:11--13, 1974.

\bibitem{Tichy2007}
R.~F. Tichy and V.~Ziegler.
\newblock Units generating the ring of integers of complex cubic fields.
\newblock {\em Colloq. Math.}, 109(1):71--83, 2007.

\bibitem{Widmer2010}
M.~Widmer.
\newblock Counting primitive points of bounded height.
\newblock {\em Trans. Amer. Math. Soc.}, 362:4793--4829, 2010.

\bibitem{Zariski1975}
O.~Zariski and P.~Samuel.
\newblock {\em Commutative algebra. {V}ol. 1}.
\newblock Graduate Texts in Mathematics, No. 28. Springer-Verlag, New York,
  1975.

\bibitem{Ziegler2008}
V.~Ziegler.
\newblock The additive unit structure of complex biquadratic fields.
\newblock {\em Glas. Mat.}, 43(63)(2):293--307, 2008.

\end{thebibliography}
\end{document}